\newcounter{iii}
\newcommand{\bb}{{\mathcal B}}
\newcommand{\aaa}{{\mathcal A}}
\newcommand{\ff}{\mathcal F}
\theoremstyle{plain}
\newtheorem{thm}{Theorem}
\newtheorem{stat}{Statement}
\newtheorem{prop}[thm]{Proposition}
\newtheorem{pro}{Problem}
\newtheorem{cor}[thm]{Corollary}
\theoremstyle{definition}
\numberwithin{equation}{section}
\numberwithin{thm}{section}
\title{Degree versions of theorems on intersecting families via stability}
\author{Andrey Kupavskii}
\address{University of Oxford\\
Moscow Institute of Physics and Technology; Email: {\tt kupavskii@ya.ru}.} \thanks{The research was supported by the Advanced Postdoc.Mobility grant no. P300P2\_177839 of the Swiss National Science Foundation, by the Russian Foundation for Basic Research (grant
no.  18-01-00355) and the Council for the Support of Leading Scientific Schools of the President of the
Russian Federation (grant no.
N.Sh.-6760.2018.1).}
\date{}
\begin{document}
\maketitle
\begin{abstract} For a family of subsets of an $n$-element set, its matching number is the maximum number of pairwise disjoint sets. Families with matching number $1$ are called intersecting. The famous Erd\H os--Ko--Rado theorem determines the size of the largest intersecting family of $k$-sets. The problem of determining the largest family of $k$-sets with matching number $s>1$ is known under the name of  the Erd\H os Matching Conjecture and is still open for a wide range of parameters. In this paper, we study the degree versions of both theorems.

More precisely, we give degree and  $t$-degree versions of the Erd\H os--Ko--Rado and the Hilton--Milner theorems, extending the results of Huang and Zhao, and Frankl, Han, Huang and Zhao. We also extend the range in which the degree version of the Erd\H os Matching Conjecture holds.

\end{abstract}

\section{Introduction}\label{sec1}
For integers $a\le b$, put $[a,b]:=\{a,a+1,\ldots, b\}$ and $[n]:=[1,n]$. For a set $X$ and an integer $k\ge 0$, let  $2^X$ and ${X\choose k}$ stand for the collections of all subsets and of all $k$-element subsets ({\it $k$-sets}) of $X$, respectively. Any collection of sets is called a {\it family}.  We call a family {\it intersecting}, if any two sets from it intersect. A ``trivial'' example of such family is all sets containing a fixed element. A family $\ff$ is called \textit{non-trivial} if $\bigcap _{F\in \ff}F=\emptyset$.

The following theorem is one of the classic results in extremal combinatorics.
\begin{thm}[Erd\H os, Ko, Rado \cite{EKR}]\label{thmekr} Let $n\ge 2k>0$ and consider an intersecting family $\ff\subset {[n]\choose k}$. Then $|\ff|\le {n-1\choose k-1}$.  \end{thm}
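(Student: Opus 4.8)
The plan is to prove Theorem~\ref{thmekr} by Katona's cyclic permutation (averaging) argument, which yields the exact bound directly without any compression step. Throughout, identify the ground set with $\mathbb{Z}/n\mathbb{Z}$ and call a set of the form $\{a,a+1,\ldots,a+k-1\}$ (indices mod $n$) an \emph{arc}.

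First I would establish the cyclic lemma: if $n\ge 2k$, then any family of pairwise intersecting arcs has at most $k$ members. If such a family is nonempty, pick one of its arcs and relabel so that it is $A=\{1,\ldots,k\}$. An arc with left endpoint $\ell$ fails to meet $A$ precisely when $\ell\in\{k+1,\ldots,n-k+1\}$, so every arc of the family has left endpoint in $\{1,\ldots,k\}\cup\{n-k+2,\ldots,n\}$, which is $2k-1$ values. Removing $A$ itself, the remaining $2k-2$ admissible left endpoints split into $k-1$ pairs $\{b,\,b-k\}$ with $b\in\{2,\ldots,k\}$, and the two arcs with left endpoints $b$ and $b-k$, namely $\{b,\ldots,b+k-1\}$ and $\{b-k,\ldots,b-1\}$, are disjoint since their union has exactly $2k\le n$ consecutive elements. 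Hence at most one arc from each pair belongs to the family, giving at most $1+(k-1)=k$ arcs in total.

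Next I would double count the pairs $(\pi,F)$ where $\pi\colon[n]\to\mathbb{Z}/n\mathbb{Z}$ is a bijection and $\pi(F)$ is an arc, as $F$ ranges over $\ff$. A fixed $k$-set $F$ is mapped onto an arc by exactly $n\cdot k!\,(n-k)!$ of the $n!$ bijections (choose one of the $n$ arcs, order $F$ inside it and $[n]\setminus F$ outside), so the number of such pairs equals $|\ff|\cdot n\cdot k!\,(n-k)!$. On the other hand, for each fixed $\pi$ the sets of $\ff$ sent to arcs form a pairwise intersecting family of arcs, so by the cyclic lemma there are at most $k$ of them. Comparing the two counts gives
\[
|\ff|\cdot n\cdot k!\,(n-k)!\ \le\ n!\cdot k,
\]
that is, $|\ff|\le \frac{k}{n}\binom{n}{k}=\binom{n-1}{k-1}$.

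The main obstacle is the cyclic lemma, and in particular treating the boundary case $n=2k$: there the paired arcs become exact complements, but they are still disjoint, so the bound $k$ persists and the whole argument goes through. (One could instead argue via the shifting operators, reducing to a shifted intersecting family and then counting directly; I expect the cyclic argument above to be shorter and to generalize more transparently to the degree versions studied later in the paper.)
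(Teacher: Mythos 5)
Your argument is correct and complete: this is Katona's cyclic permutation proof of the Erd\H os--Ko--Rado theorem, and every step checks out. In the cyclic lemma the arcs meeting $A=\{1,\dots,k\}$ indeed have left endpoints among the $2k-1$ values $\{1,\dots,k\}\cup\{n-k+2,\dots,n\}$, the pairing $\{b,b-k\}$ produces two arcs whose union is $2k\le n$ consecutive residues and which are therefore disjoint, and the boundary case $n=2k$ (paired arcs are exact complements, hence still disjoint) is handled correctly, so the bound of $k$ arcs and the final count $|\ff|\le\frac kn\binom nk=\binom{n-1}{k-1}$ follow. Note, however, that the paper does not prove Theorem~\ref{thmekr} at all --- it is stated as a classical result and cited from \cite{EKR} --- so there is no in-paper argument to compare against. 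The original proof of Erd\H os, Ko and Rado goes via shifting plus induction (the alternative you mention at the end); Katona's averaging argument is shorter and exact down to $n=2k$, but neither route is actually used in this paper, whose degree-version results rest instead on the diversity and stability machinery of Theorem~\ref{thmkz} and Corollary~\ref{corweight}.
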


Answering a question of Erd\H os, Ko, and Rado, Hilton and Milner \cite{HM} determined the largest non-trivial intersecting family of $k$-sets. If $k>3$ then, up to a permutation of the ground set, the unique largest non-trivial intersecting family is $\mathcal H_{k}$, where for any  $u\in[2,k+1]$
$$\mathcal H_u:=\Big\{A\in {[n]\choose k}:[2,u+1]\subset A\Big\}\cup\Big\{A\in{[n]\choose k}: 1\in A, [2,u+1]\cap A\ne \emptyset\Big\}.$$
For $k=3$, $\mathcal H_3$ is the largest but not unique: $\mathcal H_2$ has the same size.

Erd\H os--Ko--Rado theorem spurred the development of extremal combinatorics, and by now there are numerous variations and extensions of Theorem~\ref{thmekr} and the Hilton--Milner theorem (cf. \cite{BGPR, BNR, Bor, Bor2, Derevo, EKL, FK5, FK1, HK, IK, KL, KostM, Kup21, KZ, KSV, Pyad2, Pyad, PR, Raig, WZ} to name a few recent ones). We refer the reader to a recent survey by Frankl and Tokushige~\cite{FT7}.

For $\ff\subset {X\choose k}$ and $i\in X$, the {\it degree} $d_i(\ff)$ of an element $i$ is the number of sets from $\ff$ containing it. We denote by $\delta(\ff)$ and $\Delta(\ff)$  the {\it minimum degree} and {\it maximum degree} of an element in $\ff$. Recently, Huang and Zhao \cite{HZ} gave an elegant proof of the following theorem using a linear-algebraic approach:

\begin{thm}[\cite{HZ}]\label{thmhz} Let $n>2k>0$. Then any intersecting family $\ff\subset {[n]\choose k}$ satisfies  $\delta(\ff)\le {n-2\choose k-2}$.
\end{thm}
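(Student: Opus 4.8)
The natural approach is a weighted (``local'') version of the Hoffman eigenvalue bound for the Kneser graph, tracking not just $|\ff|$ but the projection of $\mathbf 1_\ff$ onto the first nontrivial eigenspace. First I would set up the algebraic framework: view the intersecting family $\ff$ as an independent set in the Kneser graph on $\binom{[n]}{k}$ (with $F\sim F'$ iff $F\cap F'=\emptyset$), with adjacency operator $A$, and decompose $\mathbb R^{\binom{[n]}{k}}=V_0\perp V_1\perp\cdots\perp V_k$ into the eigenspaces of the Johnson scheme, on which $A$ acts by the scalars $\lambda_i=(-1)^i\binom{n-k-i}{k-i}$; the hypothesis $n>2k$ is precisely what makes $|\lambda_0|>|\lambda_1|>\cdots>|\lambda_k|$. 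The structural input is that the indicator $\mathbf 1_{S_t}$ of a star $S_t=\{F\in\binom{[n]}{k}:t\in F\}$ lies in $V_0\oplus V_1$: writing $\mathbf 1_{S_t}=\tfrac kn\mathbf 1+h_t$ with $h_t\in V_1$, the vectors $h_t$ form a regular simplex, namely $\|h_t\|^2=\gamma:=\tfrac{n-k}{n}\binom{n-1}{k-1}$, $\langle h_s,h_t\rangle=-\gamma/(n-1)$ for $s\ne t$, and $\sum_t h_t=0$; in particular the $h_t$ span $V_1$.

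Next, write $\mathbf 1_\ff=f_0+f_1+\cdots+f_k$ with $f_i\in V_i$, so that $f_0=\tfrac{|\ff|}{\binom nk}\mathbf 1$ and $d_t(\ff)=\langle\mathbf 1_\ff,\mathbf 1_{S_t}\rangle=\tfrac kn|\ff|+\langle f_1,h_t\rangle$. Thus the degrees deviate from their average $\tfrac kn|\ff|$ only through the position of $f_1$ relative to the simplex. Writing $f_1=\sum_t c_t h_t$ with $\sum_t c_t=0$, one has $\langle f_1,h_t\rangle=\tfrac{\gamma n}{n-1}c_t$ and $\|f_1\|^2=\tfrac{\gamma n}{n-1}\sum_t c_t^2$; combining this with the elementary fact that a zero-sum vector always has a coordinate at most $-1/\sqrt{n(n-1)}$ times its Euclidean norm gives, for any element $j$ of minimum degree, $d_j(\ff)\le \tfrac kn|\ff|-\tfrac{\sqrt\gamma}{n-1}\,\|f_1\|$.

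It remains to bound $\|f_1\|$ from below. Independence gives $0=\langle\mathbf 1_\ff,A\mathbf 1_\ff\rangle=\sum_i\lambda_i\|f_i\|^2$; moving the odd-index terms to one side, discarding the nonnegative even-index terms on the other, and using $\sum_{i\ge2}\|f_i\|^2=|\ff|-\|f_0\|^2-\|f_1\|^2$ together with $|\lambda_i|\le|\lambda_3|$ for odd $i\ge3$ yields $\|f_1\|^2\ge\frac{(\lambda_0+|\lambda_3|)|\ff|^2/\binom nk-|\lambda_3||\ff|}{|\lambda_1|-|\lambda_3|}$ (with $\lambda_3:=0$ when $k\le2$). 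Substituting this into the bound of the previous paragraph makes $d_j(\ff)$ at most an explicit function $g(|\ff|)$ equal to a linear term minus the square root of an upward parabola, hence convex on the interval where it is defined; it therefore suffices to check $g$ at the two relevant endpoints — the point where the parabola becomes nonnegative, and the Erd\H os--Ko--Rado value $|\ff|=\binom{n-1}{k-1}$ (Theorem~\ref{thmekr}) — and a short computation gives $g\le\binom{n-2}{k-2}$ at both, with equality at the latter (attained by the full star).

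The step I expect to be the real obstacle is obtaining a lower bound on $\|f_1\|$ that is sharp enough: the crude estimate that lumps every component above $V_0$ into the $|\lambda_1|$-term merely reproduces the Erd\H os--Ko--Rado bound on $|\ff|$ and says nothing about a single degree, so one must genuinely peel the $V_1$-contribution away from the higher eigenspaces — which is exactly where the strict inequalities $|\lambda_1|>|\lambda_3|>\cdots$, i.e.\ $n>2k$, are used. The remaining work is bookkeeping: confirming the simplex parameters and the zero-sum lemma, the convexity of $g$, and the two endpoint inequalities; the degenerate case $k=1$, and the case where $\ff$ is contained in a single star (where the bound is immediate), can be dealt with separately if convenient.
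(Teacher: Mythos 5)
Your argument is correct, and the computations you left implicit do check out: the simplex parameters $\|h_t\|^2=\gamma=\frac{n-k}{n}\binom{n-1}{k-1}$ and $\langle h_s,h_t\rangle=-\gamma/(n-1)$; the zero-sum coordinate lemma, whose equality case ($n-1$ equal coordinates) is attained exactly by the star, which is why the final bound is tight; the right-endpoint identity $g\bigl(\binom{n-1}{k-1}\bigr)=\frac kn\binom{n-1}{k-1}-\frac{\gamma}{n-1}=\binom{n-2}{k-2}$; and the left-endpoint check, which reduces to $(n-k-1)(n-k-2)\ge k(k-2)$ and so holds for all $n\ge 2k+1$. The convexity of $g$ is also fine, since $\sqrt{q}$ is the upper half of a branch of a hyperbola on the region where the parabola $q$ is nonnegative, hence concave there. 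However, this is not the route the paper takes: Theorem~\ref{thmhz} is quoted from \cite{HZ}, and your proof is essentially a reconstruction of that linear-algebraic argument (a ``local'' Hoffman bound that isolates the $V_1$-component of $\mathbf 1_{\ff}$ and uses $|\lambda_1|>|\lambda_3|$ to separate it from the higher eigenspaces). The paper's own contribution on this question is Theorem~\ref{thm01}, proved combinatorially and only for $n\ge 2k+2$: there one splits according to the diversity $\gamma(\ff)=|\ff|-\Delta(\ff)$, bounds $\Delta+\frac{k}{k-t}\gamma$ via Corollary~\ref{corweight} when the diversity is small and bounds $|\ff|$ via Theorem~\ref{thmkz} when it is large, and in each case averages $t$-degrees over the $t$-subsets of $[2,n]$ (Proposition~\ref{stat1}). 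The trade-off between the two approaches is clear: your spectral argument reaches the full range $n>2k$, including $n=2k+1$, precisely the case for which the paper says no combinatorial proof is known; on the other hand, the diversity-based machinery extends to $t$-degrees and to the non-trivial (Hilton--Milner) setting of Theorem~\ref{thm02}, which the eigenvalue method does not obviously accommodate.
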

The bound in the theorem is tight because of the trivial intersecting family, and the condition $n>2k$ is necessary: the authors of \cite{HZ} provide an example of such family for $n=2k$ which has larger minimum degree. In fact, for most values of $k$ there are {\it regular} intersecting families in ${[2k]\choose k}$ of maximum possible size ${2k-1\choose k-1}$ (see \cite{IK}).  In the follow-up paper, Frankl, Han, Huang, and Zhao \cite{FHHZ} proved the following theorem.

\begin{thm}[\cite{FHHZ}]\label{thmfhhz} Let $k\ge 4$ and $n\ge ck^2$, where $c=30$ for $k=4,5$, and $c=4$ for $k\ge 6$. Then any non-trivial intersecting family $\ff\subset{[n]\choose k}$ satisfies $\delta(\ff)\le {n-2\choose k-2}-{n-k-2\choose k-2}$.
\end{thm}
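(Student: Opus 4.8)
The plan is to argue by contradiction: assume $\ff\subset\binom{[n]}{k}$ is non-trivial and intersecting with $\delta(\ff)\ge B+1$, where $B:=\binom{n-2}{k-2}-\binom{n-k-2}{k-2}$, and produce an element of degree at most $B$ (note that $\hh_k$ shows $B$ is sharp, its minimum-degree vertices being those outside $\{1\}\cup[2,k+1]$). First I would dispose of the case where the covering number $\tau(\ff)$ (the least size of a set meeting all members) is at least $3$: by Frankl's bound on intersecting families with covering number at least $3$ (see the survey~\cite{FT7}) one has $|\ff|=O_k\big(\binom{n-3}{k-3}\big)$ with implied constant polynomial in $k$, and since trivially $\delta(\ff)\le\tfrac kn|\ff|$, this lies below $B+1$ as soon as $n\ge ck^2$. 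So I may assume $\tau(\ff)=2$; fix a cover $\{1,2\}$ and write $\ff=\ff_{12}\sqcup\aaa'\sqcup\bb'$ with $\ff_{12}=\{F:\{1,2\}\subset F\}$, $\aaa'=\{F\in\ff:1\in F\not\ni2\}$, $\bb'=\{F\in\ff:2\in F\not\ni1\}$; non-triviality gives $\aaa',\bb'\ne\emptyset$. Set $\aaa=\{F\setminus\{1\}:F\in\aaa'\}$ and $\bb=\{F\setminus\{2\}:F\in\bb'\}\subset\binom{[3,n]}{k-1}$; these are cross-intersecting since members of $\aaa',\bb'$ can meet only outside $\{1,2\}$. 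As $d_i(\ff)=d_i(\ff_{12})+d_i(\aaa)+d_i(\bb)$ and $d_i(\ff_{12})\le\binom{n-3}{k-3}$ for $i\in[3,n]$, it is enough to find $i\in[3,n]$ with $d_i(\aaa)+d_i(\bb)\le B':=B-\binom{n-3}{k-3}=\binom{n-3}{k-2}-\binom{n-k-2}{k-2}$.

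Now I would analyse the cross-intersecting pair $(\aaa,\bb)$, which is the heart of the matter. Pick $A_0\in\aaa$, $B_0\in\bb$ (so $A_0\cap B_0\ne\emptyset$). Deleting a vertex $i\in[3,n]\setminus B_0$ from a member of $\aaa$ through $i$ leaves a $(k-2)$-set of $[3,n]\setminus\{i\}$ that must meet $B_0$, whence $d_i(\aaa)\le B'$ for all $i\notin B_0$; symmetrically $d_i(\bb)\le B'$ for all $i\notin A_0$. If $d_i(\aaa)=B'$ for every $i\in[3,n]\setminus B_0$, then $\aaa\supseteq\{A\in\binom{[3,n]}{k-1}:A\cap B_0\ne\emptyset,\ A\ne B_0\}$; combined with $\aaa\subseteq\{A:A\cap B_0\ne\emptyset\}$ and $n\ge ck^2$, the unique minimum transversal of $\aaa$ is $B_0$, so cross-intersection forces $\bb=\{B_0\}$, and then $d_i(\aaa)+d_i(\bb)=B'$ for every $i\in[3,n]\setminus B_0$ — an element as required. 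Otherwise I split on $\tau(\aaa),\tau(\bb)$ (both at most $k-1$): if, say, $\tau(\aaa)\le k-2$, then $\aaa$ lies in the $(k-1)$-sets meeting a $(\le k-2)$-set $T$, so $d_i(\aaa)\le B'-\binom{n-k-2}{k-3}$ for all $i\notin T$, while a stability version of the Hilton--Milner theorem for cross-intersecting families forces $|\bb|$ (hence $\tfrac1{n-2}\sum_i d_i(\bb)$) well below $\binom{n-3}{k-2}$, so that the average of $d_i(\aaa)+d_i(\bb)$ over the $n-O(k)$ vertices outside $T\cup A_0\cup B_0$ drops below $B'$; and if $\tau(\aaa)=\tau(\bb)=k-1$, the same stability statement pins $(\aaa,\bb)$ down to within a bounded-size symmetric difference of the extremal shape $\big(\{A:A\cap B_0\ne\emptyset\},\{B_0\}\big)$, and the equality analysis above — using the extra slack $d_i(\ff_{12})<\binom{n-3}{k-3}$ available on the few perturbed vertices — again yields a vertex of degree at most $B$.

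The step I expect to be the main obstacle is the intermediate regime of the last paragraph — where $\aaa$ is a large but non-extremal subfamily of the $(k-1)$-sets meeting $B_0$ and $\bb$ is neither $\{B_0\}$ nor tiny — which requires a quantitative stability result for cross-intersecting families controlling the trade-off between the deficiency $B'-d_i(\aaa)$ and $|\bb|$ (and the symmetric one). Keeping every constant explicit here is precisely what fixes the admissible range, and should explain the weaker threshold $n\ge30k^2$ for $k=4,5$ versus $n\ge4k^2$ for $k\ge6$, the latter following once $k$ is large enough to absorb the lower-order error terms. A secondary point to nail down is that Frankl's covering-number bound used in the first step has implied constant only polynomial in $k$, since otherwise that step alone would force $n$ far larger than $ck^2$.
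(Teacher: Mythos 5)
First, a point of orientation: this paper does not prove Theorem~\ref{thmfhhz} at all --- it is quoted from \cite{FHHZ} as background. What the paper itself proves is the stronger Theorem~\ref{thm02} (a $t$-degree version valid already for $n\ge 2k+5$ when $t=1$, $k\ge 30$), and it does so by a route quite unlike yours: it parametrizes $\ff$ by its diversity $\gamma(\ff)=|\ff|-\Delta(\ff)$, splits into regimes according to the size of $\gamma$, and in each regime combines an averaging bound for $\delta_t$ (Proposition~\ref{stat1} and its analogue) with the Frankl/Kupavskii--Zakharov bound of Theorem~\ref{thmkz} relating $|\ff|$ to $\gamma$. Your covering-number dichotomy and the reduction to a cross-intersecting pair $(\aaa,\bb)$ hanging off a $2$-element cover is, in essence, the strategy of the original paper \cite{FHHZ}, not of this one; it is a legitimate (indeed the historically successful) route, and the trade-off is visible in the hypotheses: the cross-intersecting route needs $n\gtrsim k^2$ but works for all $k\ge4$, while the diversity route gets $n$ down to $2k+O(1)$ at the price of requiring $k$ large.

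That said, as written your proposal has two genuine gaps. (1) The heart of the matter --- producing $i$ with $d_i(\aaa)+d_i(\bb)\le B'$ --- is exactly the step you defer to an unspecified ``quantitative stability result for cross-intersecting families.'' Your pointwise bounds give only $d_i(\aaa)+d_i(\bb)\le 2B'$, and your equality analysis covers only the extreme configuration $\bb=\{B_0\}$; the entire intermediate regime (which you yourself flag as the main obstacle) is where the theorem lives, and no lemma controlling the deficiency/size trade-off is stated, let alone proved. As it stands this is an outline with its central lemma missing. (2) The disposal of $\tau(\ff)\ge3$ is quantitatively off: ``implied constant polynomial in $k$'' does not suffice. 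The easy pigeonhole bound for $\tau\ge3$ gives $|\ff|\le k^3\binom{n-3}{k-3}$, whence $\delta(\ff)\le\frac kn|\ff|\le\frac{k^4}{n}\binom{n-3}{k-3}$, whereas $B=\sum_{j=0}^{k-1}\binom{n-3-j}{k-3}=\Theta(k)\binom{n-3}{k-3}$ in this range; so this step alone would force $n\gtrsim k^3$. To stay within $n\ge ck^2$ you must invoke a bound of the form $O\bigl(k^2\bigr)\binom{n-3}{k-3}$ for $\tau\ge3$ families (Frankl's sharp theorem, not the generic polynomial bound), and verify its constant explicitly. Neither gap is fatal to the strategy, but both must be filled before this is a proof.
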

This theorem is again tight: the lower bound is provided by the Hilton--Milner family $\mathcal H_k$.

 Several questions and problems in this context were asked in \cite{HZ} and \cite{FHHZ}, as well as in personal communication with Hao Huang and his presentation on the Recent Advances in Extremal Combinatorics Workshop at TSIMF, Sanya, (May 2017). Some of them are as follows:
\setlist{leftmargin=1cm}
\begin{enumerate}
\item Can one find a combinatorial proof of Theorem~\ref{thmhz}? This question was partially answered by Frankl and Tokushige \cite{FT6}, who proved it under the additional assumption $n\ge 3k$. Huang claims that their proof can be made to work for $n\ge 2k+3$, provided that one applies their approach more carefully. However, the cases $n=2k+2$ and $n=2k+1$ remained open.
\item Extend Theorem~\ref{thmfhhz} to the case $n\ge ck$ for large $k$. Ultimately, prove Theorem~\ref{thmfhhz} for all values $n\ge 2k+1$ for which it is valid.

\item Extend Theorems~\ref{thmhz} and \ref{thmfhhz} to {\it $t$-degrees}. The {\it degree of a subset} $S\subset [n]$ is the number of sets from the family containing $S$. We denote by $\delta_t(\ff)$ the minimal degree of a $t$-element subset $S\subset [n]$ ({\it minimal $t$-degree}).
\end{enumerate}

In this note, we partially answer these three questions. Our first theorem provides a $t$-degree version of Theorem~\ref{thmhz}. Its proof is combinatorial and works, in particular, for $t=1$ and $n\ge 2k+2$.

\begin{thm}\label{thm01} If $n\ge 2k+2$, then for any intersecting family $\ff$ of $k$-subsets of $[n]$ we have $\delta(\ff)\le {n-2\choose k-2}$. More generally, if $n\ge 2k+\frac{3t}{1-\frac tk}$ and $1\le t<k$, then $\delta_t(\ff)\le {n-t-1\choose k-t-1}$.\end{thm}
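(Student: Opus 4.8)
The plan is to argue by contradiction: suppose $\delta_t(\ff)\ge\binom{n-t-1}{k-t-1}+1$, so every $t$-subset of $[n]$ lies in more than $\binom{n-t-1}{k-t-1}$ members of $\ff$, and derive a contradiction; the case $t=1$, $n\ge 2k+2$, will be handled by a parallel but more delicate argument discussed at the end. For $R\subseteq[n]$ write $\ff\langle R\rangle:=\{F\in\ff:F\cap R=\emptyset\}$ and let $\tau(\cdot)$ denote the covering number. The engine of the proof is the following observation: for a $t$-set $S$, every $F\in\ff$ with $S\subseteq F$ meets each member of $\ff\langle S\rangle$, and since those members avoid $S$ the $(k-t)$-set $F\setminus S$ is a transversal of $\ff\langle S\rangle$; hence $d_S(\ff)$ is at most the number of $(k-t)$-element transversals of $\ff\langle S\rangle$. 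If $\tau(\ff\langle S\rangle)\ge k-t+1$ this number is $0$, so $d_S(\ff)=0$. If $\tau(\ff\langle S\rangle)=k-t$, pick a minimal subfamily $\mathcal C\subseteq\ff\langle S\rangle$ with $\tau(\mathcal C)=k-t$; any $(k-t)$-transversal $X$ of $\ff\langle S\rangle$ is a transversal of $\mathcal C$, so $X\cap V(\mathcal C)$ is already a transversal and, having at most $k-t$ elements, must equal $X$, whence $X\subseteq V(\mathcal C)$; since $\mathcal C$ is intersecting (being a subfamily of $\ff$) and $\tau$-critical, $|V(\mathcal C)|$ is bounded in terms of $k$ and $t$ only, so $d_S(\ff)\le\binom{|V(\mathcal C)|}{k-t}\le\binom{n-t-1}{k-t-1}$ once $n$ is large enough. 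In either case we get a contradiction, so it suffices to exhibit one $t$-set $S$ with $\tau(\ff\langle S\rangle)\ge k-t$.

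Such an $S$ I would produce by a case analysis according to how close $\ff$ is to a full star $\mathcal S_z:=\{F:z\in F\}$. If $\ff$ is trivial, $\ff\subseteq\mathcal S_z$, there is nothing to prove: any $t$-set $S$ with $z\notin S$ has $d_S(\ff)\le d_S(\mathcal S_z)=\binom{n-t-1}{k-t-1}$. If $\ff$ is non-trivial but, for a most popular element $z$, the exceptional part $\mathcal E:=\ff\setminus\mathcal S_z$ is small — of size below a threshold of order $\binom{n-k-t-1}{k-t-1}$ — then, choosing $G_0\in\mathcal E$ (a set avoiding $z$) and a $t$-set $S$ disjoint from $G_0\cup\{z\}$, the members of $\ff$ through $z$ containing $S$ must also meet $G_0$ (at most $\binom{n-t-1}{k-t-1}-\binom{n-k-t-1}{k-t-1}$ of them), and adding the at most $|\mathcal E|$ members of $\ff$ through $S$ but not $z$ still leaves $d_S(\ff)\le\binom{n-t-1}{k-t-1}$. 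In the remaining case $\ff$ is non-trivial and far from every star, so $\mathcal E$ is large for every $z$; the degree hypothesis then makes $\ff$ large overall, since $\binom kt|\ff|=\sum_S d_S(\ff)\ge\binom nt\big(\binom{n-t-1}{k-t-1}+1\big)$, and a stability version of the Erd\H os--Ko--Rado and Hilton--Milner theorems — combined, if necessary, with an analysis through the covering number $\tau(\ff)$ and a bounded $\tau$-critical subfamily in the spirit of Frankl's method — pins down the structure of $\ff$ enough to locate a $t$-set $S$ with $\tau(\ff\langle S\rangle)\ge k-t$, after which the engine applies. The precise hypothesis $n\ge 2k+\tfrac{3t}{1-t/k}$ is what makes ``$n$ large enough'' hold in the engine (the extremal case being $\ff\langle S\rangle$ essentially equal to $\binom{W}{k}$ for a set $W$ of size about $2k-t-1$) and leaves the room needed while building $S$.

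For $t=1$ the engine would require a $1$-set $S=\{s\}$ with $\tau(\ff\langle s\rangle)\ge k$, which is the largest possible covering number of a $k$-uniform intersecting family, so there is no slack: the only $\tau$-critical subfamilies attaining it are essentially $\binom{W}{k}$ with $|W|=2k-1$. The trivial and ``close to a star'' sub-cases go through verbatim with $t=1$, giving $\delta(\ff)\le\binom{n-2}{k-2}$ whenever $\ff$ is trivial or $|\ff\setminus\mathcal S_z|$ is below the corresponding threshold. In the remaining regime I would pass to a maximal intersecting family (which preserves intersection and only increases degrees) and use the structural richness of non-trivial intersecting families far from a star — for instance a pair of members meeting in a single point, together with further members whose covers are forced to spread — to show, for a well-chosen $s$, that $\ff$ has enough members avoiding $s$ for the transversal bound (applied with $S=\{s\}$) to give $d_s(\ff)\le\binom{n-2}{k-2}$; optimizing these choices is what lowers the hypothesis to $n\ge 2k+2$.

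I expect the main obstacle to be precisely the ``far from a star'' case and its $t=1$ incarnation: converting the soft information ``$\ff$ is a large intersecting family not close to any star'' into the hard combinatorial input the engine needs — a $t$-set $S$ with $\tau(\ff\langle S\rangle)$ essentially maximal — while keeping the constant in the lower bound on $n$ as small as $2k+\tfrac{3t}{1-t/k}$, respectively $2k+2$. Each of the three cases carries a threshold, and these must be balanced against one another and against the support bound for minimal $\tau$-critical intersecting families of $k$-sets; so the difficulty is concentrated in the quantitative bookkeeping rather than in any single new idea.
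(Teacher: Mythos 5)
Your overall architecture (a case split according to how close $\ff$ is to a star) parallels the paper's split according to the diversity $\gamma(\ff)=|\ff|-\Delta(\ff)$, and your ``trivial'' and ``close to a star'' cases are fine as far as they go. The genuine gap is the third case, and it is not merely a matter of quantitative bookkeeping: the mechanism you propose there cannot work. You want to locate a $t$-set $S$ with $\tau(\ff\langle S\rangle)\ge k-t$, but for most non-trivial intersecting families that are far from every star no such $S$ exists. For example, if $\ff$ consists of all $k$-sets meeting $[3]$ in at least two elements, then $\ff$ is intersecting and has diversity ${n-3\choose k-2}$ (so it falls well outside your first two cases), yet $\tau(\ff\langle S\rangle)=2$ for every $t$-set $S$ disjoint from $[3]$; the Hilton--Milner family itself behaves the same way, with $\tau(\mathcal H_k\langle S\rangle)=2$ for $S\subset[k+2,n]$. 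Your ``engine'' says nothing when $\tau(\ff\langle S\rangle)<k-t$, so the entire large-diversity regime is unhandled. (There is also a quantitative weak point inside the engine itself: for $n$ close to $2k$ the claimed bound ${|V(\mathcal C)|\choose k-t}\le{n-t-1\choose k-t-1}$ is far from automatic, since the support of a $\tau$-critical intersecting family of $k$-sets can have size comparable to $2k$, which is exactly the regime the theorem is aimed at.)

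The paper closes exactly this gap by a different device: assuming $1$ is a most popular element, averaging the $t$-degree over all $t$-subsets of $[2,n]$ gives $\delta_t(\ff)\le\frac{{k-1\choose t}}{{n-1\choose t}}\bigl(\Delta+\frac{k}{k-t}\gamma\bigr)$, and quantitative stability results bounding $\Delta+c\gamma$ for intersecting families (Theorem~\ref{thmkz} and Corollary~\ref{corweight}) then yield $\delta_t(\ff)\le{n-t-1\choose k-t-1}$ whenever $\gamma\le{n-4\choose k-3}$; for $\gamma\ge{n-4\choose k-3}$ one instead averages over all $t$-sets of $[n]$ and uses the upper bound on $|\ff|$ from \eqref{eq01} with $u=3$. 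Some such stability-plus-averaging input (or another idea entirely) is needed in your third case; transversal counting alone cannot reach families with small covering number and large diversity, which are precisely the hard instances here.
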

After writing a preliminary version of the paper, we read the paper \cite{FT6}, where Theorem~\ref{thm01} is proved for $t=1$ and $n\ge 3k$. It turned out that the approach the authors took is very similar to the approach we use to prove Theorem~\ref{thm01}. However, it seems that their proof, unlike ours, does not work for $n=2k+2$, which is probably due to the fact that they use the original Frankl's degree theorem \cite{Fra1} instead of Theorem~\ref{thmkz} (cf. Section~\ref{secprel}).

Our main theorem is a $t$-degree version of Theorem~\ref{thmfhhz} with much weaker restrictions on $n$ (for moderately large $k$).
\begin{thm}\label{thm02} If $t=1$, $n\ge 2k+5$, and $k\ge 30$, or $1<t\le \frac k4-2$, $n\ge 2k+14t$, then for any non-trivial intersecting family $\ff$ of $k$-subsets of $[n]$ we have $\delta_t(\ff)\le {n-t-1\choose k-t-1}-{n-t-k-1\choose k-t-1}$. \end{thm}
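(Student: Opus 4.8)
\emph{Proof sketch.} The plan is to argue by contradiction: assuming $\delta_t(\ff)\ge \binom{n-t-1}{k-t-1}-\binom{n-t-k-1}{k-t-1}+1$, I would first show that $|\ff|$ must be very large, then invoke the stability result of Section~\ref{secprel} to pin down the structure of $\ff$, and finally contradict the assumption by a direct computation on the relevant extremal family. Write $d_T(\ff)$ for the degree of a $t$-set $T$. Double-counting incidences between members of $\ff$ and the $t$-subsets they contain gives
\begin{equation}\label{eq:count}
\binom kt\,|\ff|=\sum_{T\in\binom{[n]}{t}}d_T(\ff)\ \ge\ \binom nt\,\delta_t(\ff),
\end{equation}
so by \eqref{eq:count} one gets $|\ff|\ge\frac{\binom nt}{\binom kt}\big(\binom{n-t-1}{k-t-1}-\binom{n-t-k-1}{k-t-1}+1\big)$. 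Using the identity $\binom nt\binom{n-t-1}{k-t-1}=\frac{n(k-t)}{k(n-t)}\binom kt\binom{n-1}{k-1}$ and a crude bound on the subtracted term, the hypotheses $t\le\frac k4-2,\ n\ge2k+14t$ (respectively $t=1,\ k\ge30,\ n\ge2k+5$) force $|\ff|$ to exceed the threshold appearing in Theorem~\ref{thmkz}.

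Next I would apply Theorem~\ref{thmkz}. Since $\ff$ is non-trivial, its covering number $\tau(\ff)$ — the least size of a set meeting every member of $\ff$ — is at least $2$. If $\tau(\ff)\ge3$, the size bound for intersecting families of large covering number contradicts the lower bound on $|\ff|$ just derived; hence $\tau(\ff)=2$, and Theorem~\ref{thmkz} then shows that, up to a permutation of $[n]$, $\ff$ is a subfamily of $\mathcal H_u$ for some $u\in[2,k]$ (the few remaining maximal intersecting families of covering number $2$ are either subfamilies of some $\mathcal H_u$ as well or have size below the threshold, and so are excluded).

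It then remains to bound $\delta_t(\mathcal H_u)$. Since $\ff\subseteq\mathcal H_u$ we have $d_T(\ff)\le d_T(\mathcal H_u)$ for every $t$-set $T$, so $\delta_t(\ff)\le\delta_t(\mathcal H_u)$. A short case check — comparing $t$-sets according to their intersection with $[u+1]=\{1\}\cup[2,u+1]$ — shows that the minimum $t$-degree of $\mathcal H_u$ is attained by a $t$-set disjoint from $[u+1]$, giving
\[
\delta_t(\mathcal H_u)=\binom{n-t-1}{k-t-1}-\binom{n-t-1-u}{k-t-1}+\binom{n-u-t-1}{k-u-t},
\]
the last term being present only when $u+t\le k$. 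An elementary comparison of binomial coefficients, in which $n\ge2k+14t$ guarantees the needed inequalities, shows this expression is non-decreasing in $u$ on $[2,k]$; its value at $u=k$ equals $\binom{n-t-1}{k-t-1}-\binom{n-t-k-1}{k-t-1}$, so $\delta_t(\ff)\le\binom{n-t-1}{k-t-1}-\binom{n-t-k-1}{k-t-1}$, contradicting the assumption and finishing the proof.

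The main obstacle is the reduction to structure: classical Hilton--Milner stability (as used in \cite{FT6} via Frankl's original degree theorem) identifies $\ff$ only when $n$ is comfortably larger than $k$, whereas here $n$ may be as small as $2k+O(t)$; pushing the stability dichotomy down to this range is exactly the role of the sharper input Theorem~\ref{thmkz}. A secondary, purely computational, nuisance is verifying the monotonicity of $\delta_t(\mathcal H_u)$ uniformly for $1\le t\le\frac k4-2$, in particular when $u$ is close to $k$, where one of the binomial coefficients involved lies past its peak and the comparison must be carried out with some care.
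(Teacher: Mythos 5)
Your overall architecture (averaging over $t$-sets to pass between $|\ff|$, or pieces of $\ff$, and $\delta_t(\ff)$, then feeding in Theorem~\ref{thmkz}) is in the right spirit, but the central structural step is a genuine gap. Theorem~\ref{thmkz} is a \emph{size} bound in terms of the diversity $\gamma(\ff)=|\ff|-\Delta(\ff)$; it does not classify families, and in particular it does not say that a large non-trivial intersecting family with $\tau(\ff)=2$ is (up to permutation) a subfamily of some $\mathcal H_u$. That classification is in fact false in the range considered here. Take $\ff$ consisting of $U=[2,k+1]$, $U'=[2,k]\cup\{k+2\}$, and every $k$-set containing $1$ that meets both $U$ and $U'$. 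This family is non-trivial, has $\tau=2$ and $\gamma=2$, has size $\binom{n-1}{k-1}-\binom{n-k-1}{k-1}-\binom{n-k-2}{k-2}+2$ --- which for $n=2k+O(t)$ is $(1-o(1))\binom{n-1}{k-1}$ and thus exceeds any threshold your step 1 can produce --- yet it is contained in no $\mathcal H_u$: a set containing $1$ may meet $U$ only in $k+1$ and $U'$ only in $k+2$, so the family is not generated by a fixed segment $[2,u+1]$. So the dichotomy ``either $|\ff|$ is below the threshold or $\ff\subseteq\mathcal H_u$'' fails, and with it the reduction to computing $\delta_t(\mathcal H_u)$. (The final monotonicity computation for $\delta_t(\mathcal H_u)$ is essentially fine, but it is applied to a hypothesis you cannot reach.)

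The paper avoids this by never classifying $\ff$. It splits on the diversity $\gamma$ rather than on $|\ff|$ or $\tau$. For $\gamma\ge\binom{n-4}{k-3}$ it does run your argument: \eqref{eq01} with $u=3$ bounds $|\ff|$, and averaging over all $t$-sets as in \eqref{eq06} beats the target. For intermediate $\gamma$ it averages only over $t$-sets inside $[2,n]$, so that the contributions of $\Delta$ and $\gamma$ are weighted differently ($\binom{k-1}{t}$ vs.\ $\binom{k}{t}$), and the trade-off $\Delta+\frac{k}{k-t}\gamma\le(1-\prod_i\frac{n-k+1-i}{n-t-i})\binom{n-1}{k-1}$ supplied by Theorem~\ref{thmkz}/Corollary~\ref{corweight} closes the case. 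The delicate regime is exactly your missing one, $1<\gamma<\binom{n-k+t+1}{t+2}$ (which includes the example above): there the paper shows via Kruskal--Katona/inclusion--exclusion that having two distinct sets avoiding the most popular element already forces $d_1(\ff)\le d_1(\mathcal H_k)-\binom{n-k-2}{k-2}$, and then converts this surplus into a bound on the \emph{average} $t$-degree over $t$-sets in $[k+2,n]$, absorbing both the sets meeting $[k+2,n]$ in fewer than $t$ elements and the at most $\binom{k}{t}\gamma$ incidences coming from the diversity part. Some argument of this kind is unavoidable; your sketch has no mechanism for handling families that are large, have tiny diversity, and are not subfamilies of any $\mathcal H_u$.
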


The dependencies in the theorem are not optimal. One reason is that there is a tradeoff between different parameters. (E.g., it follows from the proof that the theorem holds for $n\ge 2k+6$ and $k\ge 15$.) Most importantly, pushing farther the bounds on $n,k$ would require much more complicated calculations, which we decided to avoid. It is very much possible that one can show the validity of Theorem~\ref{thm02} for $n\ge 2k+3$ and, say, $k\ge 100$, using a refinement of our approach. We made even less effort to optimize the dependencies for $t>1$. Still, the condition on $n$ here is much weaker than that in Theorem~\ref{thmfhhz} and is very close to the best possible one.

We are unaware of any examples that show that Theorem~\ref{thm01} or~\ref{thm02} does not hold for $n>2k$ with some $t\ge 1$. We suspect that both theorems should be valid for $n>2k+c$ with some fixed small $c$ (or even $c=1$) for {\it any} $t$.\\

The {\it matching number} $\nu(\ff)$ of a family $\ff$ is the maximum number of pairwise disjoint sets from $\ff$. That is, the intersecting families are exactly the families with matching number $1$. It is a natural question to ask, what is the largest family with matching number (at most) $s$. Let us denote by $m(n,k,s)$ the size of the largest family $\ff\subset{[n]\choose k}$ with $\nu(\ff)\le s$. Note that this question is only interesting for $n\ge k(s+1)$.  Consider the following two families: $$\mathcal A_0(n,k,s):=\{A\subset[n]: A\cap [s]\ne \emptyset\},$$
$$A_k(k,s):={[k(s+1)-1]\choose k}.$$
Erd\H os conjectured \cite{E} that $m(n,k,s)=\max\big\{|\aaa_0(n,k,s)|,|\aaa_k(k,s)|\big\}$. This conjecture is known under the name of the Erd\H os Matching Conjecture. It was studied quite extensively over the last 50 years, but still remains unsolved in general.  It is known to be true for $k\le 3$ \cite{F5}, for $n\ge (2s+1)k-s$ \cite{F4}, as well as for $n\ge \frac 53sk-\frac 23s$ for $s\ge s_0$ \cite{FK16}. We note that $\aaa_0(n,k,s)$ is bigger than $\aaa_k(k,s)$ already for relatively small $n$: the condition $n>(k+1)(s+1)$ suffices.

Finding degree versions of the Erd\H os Matching Conjecture falls into a more general recent trend to study the so-called Dirac thresholds.\footnote{By analogy with the famous degree criterion for the existence of a Hamilton cycle in a graph.} See, e.g.,  \cite{aletal}, \cite{FK16}, \cite{HPS}, \cite{KO}.
The following theorem was proved in \cite{HZ}.
\begin{thm}[\cite{HZ}]\label{thmhzm} Given $n,k,s$ with $n\ge 3k^2(s+1)$, if for a family $\ff\subset{[n]\choose k}$ with $\nu(\ff)\le s$ we have $\delta_1(\ff)\le {n-1\choose k-1}-{n-s-1\choose k-1}=\delta_1(\aaa_0(n,k,s))$.
\end{thm}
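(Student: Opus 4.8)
The plan is to induct on $s$, the base case $s=1$ being Theorem~\ref{thmhz}. So I would assume $s\ge 2$ and that the theorem holds with $s-1$ in place of $s$ (legitimate, since $n\ge 3k^2(s+1)\ge 3k^2s$), and take $\ff\subset\binom{[n]}{k}$ with $\nu(\ff)\le s$. If $\nu(\ff)\le s-1$, the inductive hypothesis already gives $\delta_1(\ff)\le\binom{n-1}{k-1}-\binom{n-s}{k-1}\le\binom{n-1}{k-1}-\binom{n-s-1}{k-1}$, so I may assume $\nu(\ff)=s$. Also, if $\ff$ has a transversal $T$ with $|T|\le s$, then every member of $\ff$ meets $T$ and any $u\notin T$ satisfies $d_u(\ff)\le\binom{n-1}{k-1}-\binom{n-1-s}{k-1}$, which is the bound I want; so I may also assume $\tau(\ff)\ge s+1$.

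Next I would fix a minimum-degree vertex $v$ and put $\ff_{\bar v}=\{F\in\ff:v\notin F\}$, a $k$-uniform family on $[n]\setminus\{v\}$ with $\nu(\ff_{\bar v})\le s$. Writing $d_u(\ff)=d_u(\ff_{\bar v})+d_u(\ff(v))$ for the link $\ff(v)$ of $v$ and summing $\delta_1(\ff)=d_v(\ff)\le d_u(\ff)$ over the $n-1$ vertices $u\ne v$ gives $(n-1)\delta_1(\ff)\le\sum_{u\ne v}d_u(\ff)=k|\ff_{\bar v}|+(k-1)\delta_1(\ff)$, hence
$$\delta_1(\ff)\ \le\ \frac{k}{n-k}\,|\ff_{\bar v}|.$$
Plugging in the Erd\H os Matching bound $|\ff_{\bar v}|\le\binom{n-1}{k}-\binom{n-1-s}{k}$ (valid by \cite{F4}, since $n-1\ge(2s+1)k-s$) and simplifying through $\frac{k}{n-k}\binom{n-1}{k}=\binom{n-1}{k-1}$ and $\frac{k}{n-k}\binom{n-1-s}{k}=\frac{n-s-k}{n-k}\binom{n-s-1}{k-1}$ yields
$$\delta_1(\ff)\ \le\ \binom{n-1}{k-1}-\binom{n-s-1}{k-1}+\frac{s}{n-k}\binom{n-s-1}{k-1},$$
so the crude estimate exceeds the target only by the lower-order term $\frac{s}{n-k}\binom{n-s-1}{k-1}$.

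Absorbing this term is the heart of the matter, and it is where $\tau(\ff)\ge s+1$ must be used, through a sharpening of the bound on $|\ff_{\bar v}|$. If $|\ff_{\bar v}|$ attains the Erd\H os Matching bound, then by uniqueness of the extremal family in this range \cite{F4} there is a set $T$ with $|T|=s$ meeting every member of $\ff_{\bar v}$; and then $T$ is in fact a transversal of all of $\ff$, because any $F\in\ff$ with $v\in F$ and $F\cap T=\emptyset$ could be extended greedily --- using $n\ge 3k^2(s+1)$ --- to a matching $F,G_1,\dots,G_s$ with $G_i\in\ff_{\bar v}$ through the $i$-th element of $T$, contradicting $\nu(\ff)=s$; this returns us to the transversal case already handled. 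The difficult part is the intermediate regime where $\ff_{\bar v}$ is merely close to, but not equal to, the extremal family: here one needs a stability version of the Erd\H os Matching Conjecture, asserting that a family with matching number at most $s$ and no $s$-element transversal lies a definite amount below the extremal size, and --- when $s$ is not small compared with $k$, so that this deficit alone is insufficient --- additionally carries a Hilton--Milner-type structure (an $(s-1)$-element transversal plus a small, structured remainder) from which a low-degree vertex can be read off directly rather than through averaging. Such a stability statement is available for $n\ge 3k^2(s+1)$ via the delta-system (kernel) method, and inserting it into the displayed inequality removes the extra term and completes the induction.
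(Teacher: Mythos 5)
This theorem is quoted from \cite{HZ}; the paper does not reprove it, but the proof of its strengthening, Theorem~\ref{degemc} in Section~\ref{sec6}, is the relevant comparison, and your skeleton (reduce to $\tau(\ff)\ge s+1$, bound $\delta_1$ by averaging, beat the averaging loss with a stability version of the Erd\H os Matching Conjecture) is exactly the strategy used there. However, your write-up has two genuine gaps. First, the entire content of the argument is concentrated in the stability statement, which you invoke as a black box (``such a stability statement is available \dots via the delta-system method'') without stating it quantitatively or checking that its deficit actually dominates your error term $\frac{s}{n-k}{n-s-1\choose k-1}$. The paper needs precisely Theorem~\ref{thmhil2}, which for $\nu(\G)=s$, $\tau(\G)\ge s+1$ gives a deficit of $\frac{u-s-1}{u}{n-s-k\choose k-1}$ with $u\approx n/k$, and then spends the bulk of Section~\ref{sec6} verifying the comparison (estimating ${n-s-k\choose k-1}/{n-s\choose k}$ from below and the averaging loss from above under \eqref{eq33}, \eqref{eq34}). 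Your further fallback for ``$s$ not small compared with $k$'' --- a Hilton--Milner-type structural statement from which a low-degree vertex ``can be read off directly'' --- is not a known ingredient of any proof in this range and is not made precise; as the actual estimates show, no such second regime is needed, but as written this part of your argument cannot be checked.

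Second, there is a structural gap in your case analysis: you apply the matching/stability bounds to $\ff_{\bar v}$, but $\tau(\ff)\ge s+1$ does not imply $\tau(\ff_{\bar v})\ge s+1$. You only treat the subcase where $\ff_{\bar v}$ attains the extremal size exactly (and hence equals $\aaa_0$ restricted to $[n]\setminus\{v\}$, allowing the greedy matching argument). If $\ff_{\bar v}$ has an $s$-element transversal $T$ but is far from extremal, neither the stability bound (which needs $\tau\ge s+1$) nor your greedy extension applies, and the naive degree bound for $u\notin T\cup\{v\}$ only gives $d_u(\ff)\le{n-1\choose k-1}-{n-s-1\choose k-1}+{n-s-2\choose k-2}$, which overshoots the target. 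The paper sidesteps this entirely by keeping the dichotomy on $\ff$ itself --- either $\tau(\ff)\le s$ (done) or $\tau(\ff)\ge s+1$ (apply Theorem~\ref{thmhil2} to $\ff$) --- and using the cruder but unconditional average $\delta_t(\ff)\le\frac{{k\choose t}}{{n\choose t}}|\ff|$ over all of $[n]$, accepting a slightly larger averaging loss that the stability deficit still absorbs. Your refined averaging through $\ff_{\bar v}$ buys a smaller error term but creates this uncovered intermediate case; to repair it you should either switch to the global average or add an argument for the case $\tau(\ff_{\bar v})\le s<\tau(\ff)$.
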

This improved the result of Bollob\'as, Daykin, and Erd\H os \cite{BDE}, who arrived at the same conclusion for $n\ge 2k^3s$. The authors of \cite{HZ} conjectured that the same should hold for any $n>k(s+1)$.
Note that the family $\aaa_k(k,s)$ does not appear in the degree version since its minimum $t$-degree is 0 for $n\ge k(s+1)$ and $t\ge 1$.  Note that, for integer $t\ge 1$, we have $\delta_t(\aaa_0(n,k,s))= {n-t\choose k-t}-{n-s-t\choose k-t}$.

In this paper we improve and generalize Theorem~\ref{thmhzm} for $k$ large in comparison to $s$.

\begin{thm}\label{degemc} Fix $n,s,k$ and $t\ge 1$, such that $n\ge 2k^2$,  and $k\ge 5st$ ($k\ge 3s$ for $t=1$). For any family $\ff\subset {[n]\choose k}$ with $\nu(\ff)\le s$ we have $\delta_t(\ff) \le \delta(\aaa_0(n,k,s))$, with equality only in the case $\ff=\aaa_0(n,k,s)$.
\end{thm}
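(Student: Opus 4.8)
The plan is to prove the ostensibly stronger statement that $\delta_t(\ff)\ge\delta_t(\aaa_0(n,k,s))$ already forces $\ff=\aaa_0(n,k,s)$. Since $\delta_t(\aaa_0(n,k,s))=\binom{n-t}{k-t}-\binom{n-s-t}{k-t}$, this yields both the inequality (if $\delta_t(\ff)$ were strictly larger we would still conclude $\ff=\aaa_0(n,k,s)$, hence $\delta_t(\ff)=\delta_t(\aaa_0(n,k,s))$, a contradiction) and the equality characterisation at once. So assume $\nu(\ff)\le s$ and $\delta_t(\ff)\ge\binom{n-t}{k-t}-\binom{n-s-t}{k-t}$. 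When $s=1$ the conclusion is essentially Theorems~\ref{thm01} and~\ref{thm02}: the former gives $\delta_t(\ff)\le\binom{n-t-1}{k-t-1}=\delta_t(\aaa_0(n,k,1))$; if $\ff$ is non-trivial the latter gives a strictly smaller value, while if $\ff$ is a star missing some $k$-set through its centre, then a $t$-subset of that $k$-set avoiding the centre has degree strictly below $\binom{n-t-1}{k-t-1}$. So assume $s\ge2$. The proof then runs in three steps: (1) the degree hypothesis forces $|\ff|$ to be within a small error of $|\aaa_0(n,k,s)|$; (2) a stability form of Frankl's matching theorem pins $\ff$ inside a single ``star'' $\aaa_0^Z:=\{A\in\binom{[n]}{k}:A\cap Z\ne\emptyset\}$ for some $s$-set $Z$; (3) the degree hypothesis, reapplied one $t$-set at a time, shows that no member of that star is missing.

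\emph{Step 1: size.} Double counting pairs $(F,T)$ with $F\in\ff$ and $T\in\binom{F}{t}$ gives $\binom kt|\ff|=\sum_{|T|=t}d_T(\ff)\ge\binom nt\,\delta_t(\ff)$. Using the identities $\binom nt\binom{n-t}{k-t}=\binom kt\binom nk$ and $\binom kt\binom{n-s}k=\binom{n-s}t\binom{n-s-t}{k-t}$ this rearranges to $|\ff|\ge\binom nk-\binom{n-s}k\cdot\tfrac{\binom nt}{\binom{n-s}t}$, that is
\[
|\ff|\ \ge\ |\aaa_0(n,k,s)|-\eta,\qquad \eta:=\Big(\tfrac{\binom nt}{\binom{n-s}t}-1\Big)\binom{n-s}k .
\]
Since $\binom nt-\binom{n-s}t\le s\binom{n-1}{t-1}$ and $\binom{n-s}k=\tfrac{n-s}{k}\binom{n-s-1}{k-1}$, the error satisfies $\eta=O\!\big(\tfrac{st}{k}\binom{n-s-1}{k-1}\big)$, and under the hypotheses $n\ge2k^2$ and $k\ge5st$ (resp.\ $k\ge3s$ for $t=1$) one checks that $\eta$ stays below the gap between $|\aaa_0(n,k,s)|$ and the maximum size of a family with matching number $\le s$ that is not contained in any single star.

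\emph{Steps 2 and 3: structure, then exact reconstruction.} As $n\ge2k^2>(2s+1)k-s$ and $k\ge3s$, we are well inside the range where the Erd\H os Matching Conjecture holds together with its stability counterpart: a family $\ff$ with $\nu(\ff)\le s$ and $|\ff|\ge|\aaa_0(n,k,s)|-\eta$, $\eta$ as above, must satisfy $\ff\subseteq\aaa_0^Z$ for some $s$-set $Z$ (for $s=1$ this is the Hilton--Milner theorem; in general it is the Hilton--Milner-type theorem for families of bounded matching number, cf.\ the stability results of Section~\ref{secprel}). Relabelling so that $Z=[s]$ and setting $D:=\aaa_0(n,k,s)\setminus\ff$, fix any $t$-set $T\subseteq[s+1,n]$. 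Every $k$-set containing $T$ and meeting $[s]$ lies in $\aaa_0(n,k,s)$, so $d_T(\aaa_0(n,k,s))=\binom{n-t}{k-t}-\binom{n-s-t}{k-t}=\delta_t(\aaa_0(n,k,s))$, and since $\ff=\aaa_0(n,k,s)\setminus D$ we get $d_T(\ff)=\delta_t(\aaa_0(n,k,s))-d_T(D)$. The hypothesis $d_T(\ff)\ge\delta_t(\ff)\ge\delta_t(\aaa_0(n,k,s))$ therefore forces $d_T(D)=0$ for \emph{every} $t$-set $T\subseteq[s+1,n]$. If $D$ were non-empty, take $A\in D$; then $A\cap[s]\ne\emptyset$, so $|A\setminus[s]|\le k-1$, while $|A\setminus[s]|\ge k-s\ge t$ (indeed $k\ge5st\ge s+t$, resp.\ $k\ge3s\ge s+1$ when $t=1$), so some $t$-set $T\subseteq A\setminus[s]\subseteq[s+1,n]$ has $d_T(D)\ge1$ — a contradiction. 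Hence $D=\emptyset$ and $\ff=\aaa_0(n,k,s)$, which finishes the proof.

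\emph{The main obstacle.} Steps~1 and~3 are elementary; the heart of the argument is Step~2. A merely approximate stability statement (``$\ff$ is structurally close to some star'') would not suffice, because Step~3 crucially needs $\ff$ to be \emph{contained} in a star: one genuinely requires an exact-structure, Hilton--Milner-type stability theorem for bounded matching number valid in the wide range $n\ge2k^2$ with $k$ only linearly larger than $s$ — precisely the type of input assembled in Section~\ref{secprel}, which one must either invoke or reprove. Correspondingly, the real quantitative work is to show that the deficiency $\eta\asymp\tfrac{st}{k}\binom{n-s-1}{k-1}$ from Step~1 is provably below the (comparatively small, when $n$ is close to $2k^2$) gap in that stability theorem; the hypotheses $n\ge2k^2$ and $k\ge5st$ (resp.\ $k\ge3s$ for $t=1$) are calibrated for exactly this estimate, and, as noted after Theorem~\ref{thm02}, finer bookkeeping would relax them somewhat.
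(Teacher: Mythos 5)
Your proposal is correct and follows essentially the same route as the paper: the same double count over $t$-sets relating $\delta_t(\ff)$ to $|\ff|$, combined (in contrapositive form) with the same stability theorem for families with $\tau(\ff)\ge s+1$ (Theorem~\ref{thmhil2}), plus the observation that a proper subfamily of a star on an $s$-set $Z$ has some $t$-set disjoint from $Z$ of deficient degree, which settles the equality case. The one substantive item you assert rather than verify --- that the deficiency $\eta\asymp\frac{st}{k}\binom{n-s-1}{k-1}$ lies below the stability gap $\frac{u-s-1}{u}\binom{n-s-k}{k-1}$ under $n\ge 2k^2$ and $k\ge 5st$ (resp.\ $k\ge 3s$) --- is exactly the comparison the paper's proof carries out in detail, so the calculation is correctly identified and located even if not executed.
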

The constants here  are again not optimal and chosen in this way to simplify the calculations.

All theorems are proved using stability results for the corresponding problems (cf. Section~2). We also note that  one can obtain asymptotic analogues of Theorems~\ref{thmhzm} and~\ref{degemc}  for fixed $k$ and $n>\frac 53ks$ from the results on the EMC that we mentioned (cf. \cite{FK16} for details).

\section{Preliminaries}\label{secprel}

For a family $\ff$, the {\it diversity} $\gamma(\ff)$ is the quantity $|\ff|-\Delta(\ff)$.  In particular, a family $\ff$ is non-trivial if and only if $\gamma(\ff)\ge 1$. The following far-reaching generalization of the Hilton--Milner theorem was proved by Frankl \cite{Fra1} and further strengthened by Kupavskii and Zakharov \cite{KZ}.
\begin{thm}[\cite{KZ}]\label{thmkz} Let $n>2k>0$ and $\ff\subset {[n]\choose k}$ be an intersecting family. If $\gamma(\ff)\ge {n-u-1\choose n-k-1}$ for some real $3\le u\le k$, then \begin{equation}\label{eq01}|\ff|\le {n-1\choose k-1}+{n-u-1\choose n-k-1}-{n-u-1\choose k-1}.\end{equation}
\end{thm}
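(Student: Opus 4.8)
\textbf{Proof proposal for Theorem~\ref{thmkz}.}

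The plan is to prove the bound by a careful shifting/compression argument, reducing to a structural analysis of the link of the most popular element. Set $m:=\Delta(\ff)$ and fix an element, say $1$, with $d_1(\ff)=m$, so that $\gamma:=\gamma(\ff)=|\ff|-m$ is the number of sets of $\ff$ avoiding $1$. First I would apply the standard left-compressions $S_{ij}$ for all $2\le i<j\le n$ (never shifting towards or away from $1$), which preserve intersecting-ness, preserve $|\ff|$, do not decrease $d_1(\ff)$ (hence do not increase $\gamma$), and leave the family shifted on the ground set $[2,n]$. The key point is that after this compression we may assume the family $\ff_{\bar 1}:=\{F\in\ff: 1\notin F\}\subset{[2,n]\choose k}$ is an intersecting, shifted family of size $\gamma$, and moreover $\ff$ is completely determined by $\ff_{\bar 1}$ together with the link $\ff_1:=\{F\setminus\{1\}: 1\in F\in\ff\}$; intersecting-ness of $\ff$ forces every set in $\ff_1$ to meet every set in $\ff_{\bar 1}$. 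To bound $|\ff|=m+\gamma$ we therefore need to bound $m=d_1(\ff)$ from above given that $\ff_1\subset{[2,n]\choose k-1}$ is a family every member of which is a transversal of the intersecting family $\ff_{\bar 1}$, i.e.\ $\ff_1\subset\{G\in{[2,n]\choose k-1}: G\cap F\ne\emptyset\ \forall F\in\ff_{\bar 1}\}=:\mathcal{T}$.

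The main step is the estimate $|\mathcal{T}|\le {n-1\choose k-1}-\big({n-u-1\choose k-1}-{n-u-1\choose n-k-1}\big)$ whenever $\gamma=|\ff_{\bar 1}|\ge {n-u-1\choose n-k-1}$. Equivalently, writing $\mathcal{N}:={[2,n]\choose k-1}\setminus\mathcal{T}$ for the $(k-1)$-sets in $[2,n]$ that \emph{miss} some $F\in\ff_{\bar 1}$, I must show $|\mathcal{N}|\ge {n-u-1\choose k-1}-{n-u-1\choose n-k-1}$. For this I use that $\ff_{\bar 1}$ is shifted and intersecting: the lexicographically/shift-extremal behaviour means the ``worst'' case — the one minimizing the number of missed $(k-1)$-sets for a given size $\gamma$ — is the initial segment of ${[2,n]\choose k}$ in the colex or an appropriate order, for which both $\gamma$ and $|\mathcal{N}|$ can be written as sums of binomial coefficients and compared directly. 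Concretely, if $\gamma\ge {n-u-1\choose n-k-1}$ then $\ff_{\bar 1}$ contains (after shifting) a ``book'' of $k$-sets forcing a copy of ${[2,u+1]\cup X\choose k-1}$-type obstructions; counting $(k-1)$-subsets of $[2,n]$ disjoint from at least one member yields the claimed lower bound on $|\mathcal{N}|$. Adding $m\le {n-1\choose k-1}-|\mathcal{N}|$ and $\gamma$ and using $\gamma\le {n-u-1\choose n-k-1}$ at the endpoint (monotonicity handles the range) gives exactly \eqref{eq01}.

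The technically delicate part — and what I expect to be the main obstacle — is making the extremal-family reduction rigorous: one needs that among all shifted intersecting $\ff_{\bar 1}\subset{[2,n]\choose k}$ of size $\gamma$, the function $|\mathcal{N}(\ff_{\bar 1})|$ is minimized by an explicit initial segment, and then that the resulting two-variable inequality in $(n,k,u)$ holds for all admissible real $u\in[3,k]$. The first claim is plausibly a Kruskal--Katona-type shadow bound applied to the complements (the family of ``holes'' $\{[2,n]\setminus F: F\in\ff_{\bar 1}\}$ and its upper shadow), but verifying that shiftedness transfers correctly to that dual setting, and that the intersecting constraint does not interfere, requires care. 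The second claim is a routine but not entirely trivial manipulation of binomial coefficients; I would handle it by reducing to the integer values $u\in\{3,4,\ldots,k\}$ where $\gamma$ is exactly a single binomial coefficient, proving it there by a telescoping identity, and then extending to real $u$ by convexity of $x\mapsto{x\choose n-k-1}$. Throughout, one must also track that equality in \eqref{eq01} is consistent with the Hilton--Milner-type extremal families $\mathcal{H}_u$, which serves as a useful sanity check on the direction of every inequality.
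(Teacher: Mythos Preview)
The paper does not contain a proof of Theorem~\ref{thmkz}: it is quoted in Section~\ref{secprel} as a known result from \cite{KZ} (Kupavskii--Zakharov) and used throughout as a black box, so there is no ``paper's own proof'' to compare your proposal against. If you want to compare approaches you would have to look at \cite{KZ} itself, whose argument proceeds via a reduction to a statement about regular bipartite graphs rather than through the direct Kruskal--Katona / cross-intersecting analysis you sketch.

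As for your outline on its own merits: the decomposition $|\ff|=|\ff_1|+|\ff_{\bar 1}|$ and the cross-intersecting constraint between $\ff_1\subset{[2,n]\choose k-1}$ and $\ff_{\bar 1}\subset{[2,n]\choose k}$ is exactly the right framework, and the needed inequality $|\ff_1|\le {n-1\choose k-1}-{n-u-1\choose k-1}$ given $|\ff_{\bar 1}|\ge{n-u-1\choose n-k-1}$ is precisely the Hilton--Lemma / Kruskal--Katona bound for cross-intersecting pairs (no shifting on $[2,n]$ is actually required; take complements in $[2,n]$ and apply Kruskal--Katona to shadows). One genuine slip: the shifts $S_{ij}$ with $i,j\ge 2$ preserve $d_1(\ff)$ \emph{exactly}, but they may raise $\Delta(\ff)$ above $d_1(\ff)$, so after shifting the quantity $|\ff|-d_1(\ff)$ need not equal the diversity $\gamma(\ff)$; you should simply work with $|\ff_{\bar 1}|$ throughout and not call it $\gamma$ post-shift. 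Also, the intersecting property of $\ff_{\bar 1}$ is irrelevant for the Kruskal--Katona step --- only the cross-intersecting condition is used --- so your worry that ``the intersecting constraint does not interfere'' is moot. With these corrections your route gives a valid proof, just not one that appears anywhere in the present paper.
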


The bound from Theorem \ref{thmkz} is sharp for integer $u$, as witnessed by $\mathcal H_u$.

In \cite{Kupstruc}, the author derived the following handy corollary of some general results in the spirit of Theorem~\ref{thmkz}.

\begin{cor}\label{corweight}
 Let $n>2k\ge 6$. For any intersecting family $\ff\subset{[n]\choose k},$ $\gamma(\ff)\le {n-4\choose k-3}$, we have $|\Delta(\ff)|+\frac{n-k-2}{k-2}\gamma(\ff)\le {n-1\choose k-1}$.
\end{cor}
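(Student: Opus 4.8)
The plan is to deduce the corollary directly from Theorem~\ref{thmkz}, rather than appealing to the more general results of \cite{Kupstruc}, by choosing the parameter $u$ in that theorem as a function of the diversity of $\ff$. Throughout write $\gamma:=\gamma(\ff)$, and recall $|\ff|=\Delta(\ff)+\gamma$ by the definition of diversity.

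First I would dispose of the case $\gamma=0$: then $\Delta(\ff)=|\ff|$, so some element lies in every set of $\ff$, whence $|\ff|\le{n-1\choose k-1}$ and the asserted inequality is immediate (with the $\gamma$-term vanishing). So assume $1\le\gamma\le{n-4\choose k-3}$. Next, view ${n-x-1\choose n-k-1}$ as the polynomial $\tfrac1{(n-k-1)!}\prod_{j=0}^{n-k-2}(n-x-1-j)$ in a real variable $x$; on $[3,k]$ this function is continuous, strictly decreasing, and takes the values ${n-4\choose k-3}$ at $x=3$ and $1$ at $x=k$. Hence there is a (generally non-integer) $u\in[3,k]$ with ${n-u-1\choose n-k-1}=\gamma$. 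Applying Theorem~\ref{thmkz} with this $u$ — whose hypothesis $\gamma\ge{n-u-1\choose n-k-1}$ then holds with equality — and substituting $|\ff|=\Delta(\ff)+\gamma$ gives
\[\Delta(\ff)\ \le\ {n-1\choose k-1}-{n-u-1\choose k-1}.\]

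It remains to prove, for every real $u\in[3,k]$, the elementary inequality ${n-u-1\choose k-1}\ge\frac{n-k-2}{k-2}{n-u-1\choose n-k-1}$; combined with the last display this yields exactly $\Delta(\ff)+\frac{n-k-2}{k-2}\gamma\le{n-1\choose k-1}$. Since $n>2k$ forces $n-k-1>k-1$, the ratio of the two binomial coefficients is the product of $n-2k$ positive factors, $\frac{{n-u-1\choose k-1}}{{n-u-1\choose n-k-1}}=\prod_{l=k}^{n-k-1}\frac{l}{n-u-l}$, and each factor is non-decreasing in $u$ on $[3,k]$ (the denominators stay positive, being at least $1$). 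Thus the ratio is minimized at $u=3$, where the product equals $\frac{(n-k-1)(n-k-2)}{(k-1)(k-2)}=\frac{n-k-1}{k-1}\cdot\frac{n-k-2}{k-2}$, which is at least $\frac{n-k-2}{k-2}$ precisely because $n>2k$. I do not expect a genuine obstacle in this argument; the only step requiring mild care is handling the binomial coefficient at a non-integer argument (so that the ``tuned'' $u$ is legitimate), and the whole estimate ultimately rests only on the standing hypothesis $n>2k$.
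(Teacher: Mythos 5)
Your argument is correct, and it supplies something the paper itself does not: the paper imports Corollary~\ref{corweight} from the preprint \cite{Kupstruc} without proof, citing ``general results in the spirit of Theorem~\ref{thmkz}'', whereas you derive it self-containedly from Theorem~\ref{thmkz} as stated. The key move --- exploiting that $u$ may be real and calibrating it so that ${n-u-1\choose n-k-1}=\gamma(\ff)$, which turns the conclusion of Theorem~\ref{thmkz} into the clean bound $\Delta(\ff)\le{n-1\choose k-1}-{n-u-1\choose k-1}$ --- is legitimate: the polynomial extension of ${n-x-1\choose n-k-1}$ is indeed continuous and strictly decreasing on $[3,k]$ (all $n-k-1$ linear factors stay $\ge 1$ there), so such a $u$ exists for every $1\le\gamma\le{n-4\choose k-3}$, and the degenerate cases ($\gamma=0$, and $k=3$ where the interval collapses to $u=3$) are correctly disposed of. The remaining estimate ${n-u-1\choose k-1}\ge\frac{n-k-2}{k-2}{n-u-1\choose n-k-1}$ is exactly the reciprocal of the computation the paper records as \eqref{eq04}: the ratio $\prod_{l=k}^{n-k-1}\frac{l}{n-u-l}$ is increasing in $u$, hence minimized at $u=3$ where it equals $\frac{(n-k-1)(n-k-2)}{(k-1)(k-2)}\ge\frac{n-k-2}{k-2}$ since $n>2k$. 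So your proof is not only valid but uses precisely the ingredients already present in Section~\ref{secprel}; what it buys is independence from the external reference, at the cost of not yielding the stronger structural statements that \cite{Kupstruc} obtains along the way.
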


In the proof of Theorem~\ref{degemc}, we use the following stability theorem, proved by Frankl and the author \cite{FK7, FK8}. Recall that {\it covering number} $\tau(\ff)$ is the minimal size of a set $S\subset [n]$, such that $S\cap F\ne \emptyset$ for any $F\in \ff$. Among the families $\ff\subset {[n]\choose k}$  that  satisfy $\nu(\ff)\le s$, the families with $\tau(\ff)>s$ are exactly the ones that are not isomorphic to a subfamily of $\mathcal A_0(n,k,s)$.

\begin{thm}[\cite{FK7}]\label{thmhil2} Fix integers $s,k\ge 2$. Let $n = (u+s-1)(k-1)+s+k,$ $u\ge s+1$. Then for any family $\mathcal G\subset {[n]\choose k}$ with $\nu(\mathcal G)= s$ and $\tau(\mathcal G)\ge s+1$ we have
\begin{equation}\label{eqhil} |\mathcal G|\le {n\choose k}-{n-s\choose k} - \frac{u-s-1}u{n-s-k\choose k-1}.
\end{equation} \end{thm}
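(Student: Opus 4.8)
\emph{The plan.} The configuration to keep in mind is $\mathcal B:=\{F\in\binom{[n]}k:F\cap[s-1]\ne\emptyset\}\cup\hh$, where $\hh$ is a copy of the Hilton--Milner family on the ground set $[s,n]$; one has $\nu(\mathcal B)=s$, $\tau(\mathcal B)=s+1$ and $|\mathcal B|=\binom nk-\binom{n-s}k-\binom{n-s-k}{k-1}+1$, which is slightly below the right-hand side of \eqref{eqhil}. Accordingly, I would peel off a full star on the $s-1$ vertices $[s-1]$ and reduce the remainder to the Hilton--Milner theorem, which plays the role of the ``$s=1$'' case of \eqref{eqhil}; equivalently one may run an induction on $s$ with that base case.

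\emph{Reduction and decomposition.} First left-compress $\mathcal G$: this preserves $|\mathcal G|$ and does not increase $\nu$. Compression can drop $\tau$ below $s+1$, but when a compression $S_{ij}$ would do so, the family before it is covered by a set of size $s+1$ of a very restricted shape and can be bounded by hand, so one may assume $\mathcal G$ is shifted. If now $\nu(\mathcal G)<s$, then $|\mathcal G|\le\binom nk-\binom{n-s+1}k=|\aaa_0(n,k,s-1)|$ by Frankl's theorem on the Erd\H os Matching Conjecture (valid in this range of $n$), and this is already below \eqref{eqhil}; so assume $\nu(\mathcal G)=s$. Write $\mathcal G=\mathcal G_1\sqcup\mathcal G_0$ with $\mathcal G_1=\{F\in\mathcal G:F\cap[s-1]\ne\emptyset\}$ and $\mathcal G_0=\{F\in\mathcal G:F\cap[s-1]=\emptyset\}\subset\binom{[s,n]}k$, so $|\mathcal G_1|\le\binom nk-\binom{n-s+1}k$. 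Since $\tau(\mathcal G)\ge s+1$, the family $\mathcal G_0$ is non-trivial on $[s,n]$ (otherwise $[s-1]$ together with a common point of $\mathcal G_0$ would cover $\mathcal G$).

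\emph{The generic case: $\mathcal G_0$ intersecting.} Here $|[s,n]|=n-s+1>2k$ and $\mathcal G_0$ is non-trivial intersecting, so the Hilton--Milner theorem gives $|\mathcal G_0|\le\binom{n-s}{k-1}-\binom{n-s-k}{k-1}+1$. Adding the bound on $|\mathcal G_1|$ and using the Pascal identity $\binom{n-s+1}k-\binom{n-s}{k-1}=\binom{n-s}k$ yields
\[ |\mathcal G|\le\binom nk-\binom{n-s}k-\binom{n-s-k}{k-1}+1\le\binom nk-\binom{n-s}k-\tfrac{u-s-1}u\binom{n-s-k}{k-1}, \]
the last inequality because $\tfrac{s+1}u\binom{n-s-k}{k-1}\ge1$ (indeed $n-s-k=(u+s-1)(k-1)$, so $\binom{n-s-k}{k-1}\ge u+s-1\ge u$). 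This establishes \eqref{eqhil}, in fact a bit more, in this case; the case $k=2$ is the Erd\H os--Gallai theorem on graphs of bounded matching number and is treated on its own.

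\emph{The main obstacle.} The real difficulty is that $\mathcal G_0$ need not be intersecting: for instance $\binom{[2s+k-1]}k$, or a complete hypergraph on a medium ground set together with a star on a pair, are shifted, have $\nu=s$ and $\tau\ge s+1$, yet $\mathcal G_0$ contains two disjoint sets. So the crux is a structural dichotomy for shifted families with $\nu(\mathcal G)=s$: either $\mathcal G$ has the ``generic'' shape above, or $\mathcal G$ is contained (up to isomorphism) in $\aaa_0(n,k,j)$ together with a family of $k$-sets living on $O(sk)$ coordinates, for some $j<s$ — in which case $|\mathcal G|$ is at most of order $|\aaa_0(n,k,s-1)|+O(1)$, which is far below \eqref{eqhil} since $n=(u+s-1)(k-1)+s+k$ is large. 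Proving this classification, with the quantitative thresholds matched to the hypothesis on $n$, is where essentially all the work lies; it would follow the shifting-based structure theory of families with bounded matching number in the spirit of Frankl's analysis of the Erd\H os Matching Conjecture and of Frankl--Kupavskii's refinements. The behaviour of $\tau$ under compression is a secondary but genuine technical point.
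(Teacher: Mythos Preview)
This theorem is not proved in the present paper; it is quoted from \cite{FK7} (Frankl and Kupavskii) as a black-box tool and then applied in Section~\ref{sec6}. There is therefore no proof here to compare your proposal against.

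Regarding your proposal on its own merits: it is, as you yourself say, not a complete argument. The decomposition $\mathcal G=\mathcal G_1\sqcup\mathcal G_0$ along $[s-1]$ and the appeal to Hilton--Milner when $\mathcal G_0$ happens to be non-trivially intersecting on $[s,n]$ is correct and does give the bound in that sub-case (your arithmetic with the Pascal identity and the check $\tfrac{s+1}{u}\binom{n-s-k}{k-1}\ge 1$ are fine). But the substantive content of the theorem lies exactly in the case you label ``the main obstacle,'' where $\mathcal G_0$ contains disjoint sets, and there you have only gestured at a structural dichotomy (``either generic, or contained in $\aaa_0(n,k,j)$ plus something on $O(sk)$ coordinates'') that is neither stated precisely nor proved. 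Two further points you wave away --- that shifting can destroy the hypothesis $\tau(\mathcal G)\ge s+1$, and that the resulting exceptional families ``can be bounded by hand'' --- are genuine difficulties rather than routine checks; handling the interaction between shifting and the covering-number constraint is in fact one of the technical contributions of \cite{FK7}. So your proposal correctly identifies the landscape and dispatches the easy case, but leaves essentially all of the real work undone.
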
\vskip+0.4cm

\subsection{Calculations}\label{sec50}

In this section we do some of the calculations used in the proofs of Theorems~\ref{thm01} and~\ref{thm02}. Substituting $u=3$ in \eqref{eq01}, we get that \begin{align}\notag|\ff|\ \le&\ {n-1\choose k-1}-{n-4\choose k-1}+{n-4\choose k-3} = \sum_{i=2}^4{n-i\choose k-2}+{n-4\choose k-3}\\ \notag =&\ {n-2\choose k-2}+2{n-3\choose k-2}=\Big(1+\frac{2(n-k)}{(n-2)}\Big){n-2\choose k-2}\\ \label{eq25}=&\ \frac{k(k-1)(3n-2k-2)}{n(n-1)(n-2)}{n\choose k}.\end{align}

We also have
$$\frac{{n-u-1\choose n-k-1}}{{n-u-1\choose k-1}}=\frac{\prod_{i=1}^{n-k-1}\frac{n-u-i}i} {\prod_{i=1}^{k-1}\frac{n-u-i}i} = \prod_{i=k}^{n-k-1}\frac{n-u-i}i= \prod_{i=k}^{n-k-1}\frac{n-u-i}{n-1-i}.$$
Clearly, in the range $3\le u\le k$ the last expression is maximized for $u=3$, and we get the following equality, provided $n\ge 2k+2$: 
\begin{equation}\label{eq04}\frac{{n-u-1\choose n-k-1}}{{n-u-1\choose k-1}}\le \prod_{i=k}^{n-k-1}\frac{n-3-i}{n-1-i}\le \frac{(k-1)(k-2)}{(n-k-1)(n-k-2)}\ \ \ \ \ \ \text{for }3\le u\le k.\end{equation}

We will also use the following formula:
\begin{equation}\label{eq07}
\frac{{n-t-k-1\choose k-t-1}}{{n-t-1\choose k-t-1}}=\frac{n-2k+1}{n-t-k}\cdot\frac{{n-t-k\choose k-t-1}}{{n-t-1\choose k-t-1}}=\ldots = \prod_{i=1}^{k}\frac{n-k+1-i}{n-t-i}.
\end{equation}

\section{Proofs}
\subsection{Proof of Theorem~\ref{thm01}}\label{sec51}

Take an intersecting family $\ff$ with maximum degree $\Delta$ and diversity $\gamma$. Then, by definition, $|\ff|= \Delta+\gamma$. W.l.o.g., we suppose that $\Delta(\ff)=d_1(\ff)$. The statement is vacuously true for trivial intersecting families, so we may assume that $\gamma\ge 1$.  We have two cases to distinguish.\vskip+0.1cm

\textbf{Case 1. $\gamma\le {n-4\choose k-3}$. }  In this case we use the following proposition.
\begin{prop}\label{stat1} Fix some $n,t,k$. If for an intersecting family of $k$-sets $\ff\subset 2^{[n]}$ with maximum degree $\Delta$ and diversity $\gamma$  we have \begin{equation}\label{eq02}\Delta+\frac k{k-t}\gamma\le {n-1\choose k-1},\end{equation} then $\delta_t(\ff)\le {n-t-1\choose k-t-1}$.\end{prop}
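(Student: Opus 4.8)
\emph{Proof plan.} The argument is a short double counting, and the intersecting hypothesis is in fact not used in it — only inequality \eqref{eq02}. After relabelling the ground set we may assume $\Delta=d_1(\ff)$, and we split $\ff=\ff_1\cup\ff'$, where $\ff_1=\{F\in\ff:1\in F\}$ and $\ff'=\{F\in\ff:1\notin F\}$, so that $|\ff_1|=\Delta$ and $|\ff'|=\gamma$. The point of this splitting is that for the extremal configuration (the full star) the minimum $t$-degree is attained on $t$-sets avoiding the popular element; accordingly we will look for a cheap $t$-set $S$ only among the $t$-subsets of $[2,n]$.

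First I would average $d_S(\ff)$ over all $S\in\binom{[2,n]}{t}$. Counting incidences between such sets $S$ and the members of $\ff$: a set $F\in\ff_1$ meets $[2,n]$ in $k-1$ points and hence contains $\binom{k-1}{t}$ of these $t$-sets, while a set $F\in\ff'$ lies inside $[2,n]$ and contains $\binom{k}{t}$ of them. Therefore
\[
\sum_{S\in\binom{[2,n]}{t}}d_S(\ff)=\Delta\binom{k-1}{t}+\gamma\binom{k}{t},
\]
and since there are $\binom{n-1}{t}$ choices of $S$ and $\delta_t(\ff)\le\min_{S\in\binom{[2,n]}{t}}d_S(\ff)$ is at most the average,
\[
\delta_t(\ff)\ \le\ \frac{\Delta\binom{k-1}{t}+\gamma\binom{k}{t}}{\binom{n-1}{t}}.
\]

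The last step is a bookkeeping identity. Since $t<k$ we have $\binom{k}{t}=\frac{k}{k-t}\binom{k-1}{t}$, and, comparing factorials (both sides count a $t$-set together with a disjoint $(k-1-t)$-set inside $[n-1]$), $\binom{n-1}{t}\binom{n-t-1}{k-t-1}=\binom{n-1}{k-1}\binom{k-1}{t}$. Substituting these, the desired bound $\delta_t(\ff)\le\binom{n-t-1}{k-t-1}$ is equivalent to $\Delta\binom{k-1}{t}+\gamma\binom{k}{t}\le\binom{n-1}{k-1}\binom{k-1}{t}$, i.e.\ after dividing by $\binom{k-1}{t}$ to $\Delta+\frac{k}{k-t}\gamma\le\binom{n-1}{k-1}$, which is exactly hypothesis \eqref{eq02}. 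So the proof reduces to the averaging above together with this translation, and there is no genuine difficulty; the only point one must get right is to average over $t$-sets \emph{disjoint} from the heavy element $1$, since averaging over all $t$-subsets of $[n]$, or over those containing $1$, would lose too much.
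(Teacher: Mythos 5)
Your proof is correct and is essentially the paper's own argument: average the $t$-degree over all $t$-subsets of $[2,n]$, count incidences as $\Delta\binom{k-1}{t}+\gamma\binom{k}{t}$, and convert via the same binomial identities. Your side remark that the intersecting hypothesis is never used is also consistent with the paper, which restates this proposition without that hypothesis later in the proof of Theorem~\ref{thm02}.
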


\begin{proof}
The sum of $t$-degrees of all $t$-subsets of $[2,n]$ is $\gamma {k\choose t}+\Delta {k-1\choose t}.$ Therefore, there is a $t$-tuple $T$ of elements in $[2,n]$, such that
\begin{equation}\label{eq4} \delta_t(T)\le \frac{\gamma {k\choose t}+ \Delta {k-1\choose t}}{{n-1\choose t}}=\frac{{k-1\choose t}}{{n-1\choose t}}\Big(\frac k{k-t}\gamma+\Delta\Big)\overset{\eqref{eq02}}{\le} \frac{{k-1\choose t}{n-1\choose k-1}}{{n-1\choose t}}={n-t-1\choose k-t-1}.\end{equation}
\end{proof}

To prove the theorem in Case~1, it is sufficient to  verify \eqref{eq02} for all intersecting families.
We may apply Corollary~\ref{corweight} to $\ff$ (otherwise, it is not difficult to obtain via direct calculations from \eqref{eq01}). We only have to check that \begin{equation}\label{eq151}\frac{k}{k-t}\le \frac{n-k-2}{k-2}\ \ \ \ \Leftrightarrow \ \ \ \ \frac{t}{k-t}\le \frac {s}{k-2},\end{equation} where $n = 2k+s$ and $s\ge 1$. We see that if $t=1$, then \eqref{eq151} holds for any $s\ge 1$. If $t>1$, then we must have $$s\ge \frac{k-2}{k-t}t \ \ \ \Leftarrow \ \ \  s\ge \frac{t}{1-\frac tk}.$$



\textbf{Case 2. $\gamma\ge {n-4\choose k-3}$. } We use the following bound on $\delta_t(\ff):$ \begin{equation}\label{eq06}\delta_t(\ff)\le \frac{{k\choose t}}{{n\choose t}} |\ff|.\end{equation}
Thus, it is sufficient for us to check that the following inequality holds:
\begin{equation}\label{eq05}|\ff|\le \frac{{n\choose t}{n-t-1\choose k-t-1}}{{k\choose t}}=\frac{k-t}{n-t}{n\choose k}.\end{equation}
Combining \eqref{eq01} and \eqref{eq25}, we get that \eqref{eq05} is satisfied if
$$ \frac{k(k-1)(3n-2k-2)}{n(n-1)(n-2)}\le \frac{k-t}{n-t}.$$
If $t=1$ then the expression above simplifies to $\frac{k(3n-2k-2)}{n(n-2)}\le 1$, which holds for any $n\ge 2k+2$.
If $t>1$, then,  knowing that $n=k$ is a root of the expression above,  it simplifies to the following quadratic inequality\footnote{One simply has to multiply both sides by the denominators, move everything to the right side and divide the expression by $n-k$.} in $n$: $(k-t)n^2 -(2k^2 +(k-3)t)n +2(k^2-1)t\ge 0$, which is definitely valid if
$$n\ge \frac{2k^2 +kt}{k-t}=2k+\frac{3t}{1-t/k}.$$


\subsection{Proof of Theorem~\ref{thm02}}\label{sec52}
The strategy of the proof is very similar to that of Theorem~\ref{thm01}. Fix a non-trivial intersecting family $\ff$ with maximum degree $\Delta$ and diversity $\gamma\ge 2$ (if $\gamma\le 1$ then $\ff$ is a subfamily of either an Erd\H os-Ko-Rado or a Hilton--Milner family). W.l.o.g., suppose that $d_1(\ff)=\Delta(\ff)$ and that $\ff$ contains the set $[2,k+1]$. Then any other set containing $1$ must intersect $U$. We compare $\ff$ with the Hilton-Milner family $\mathcal H_k.$ We consider cases depending on $\gamma$.
The case analysis, however, is be more complicated, as compared to the previous case. Notably, we get a new non-trivial Case 1.\\

\textbf{Case 1. $1<\gamma<{n-k+t+1\choose t+2}$. }
 We have $\gamma\ge 2$, and thus there is $U'\in\ff$, such that $1\notin U'$ and $U'\ne [2,k+1]$. Then it is easy to see that\footnote{We may either use the Kruskal--Katona theorem in terms of cross-intersecting families, or do the following direct computation. If $|U\cap U'|=t$, $1\le t\le k-1$, then the number of sets containing $1$ and intersecting both $U$ and $U'$ is ${n-1\choose k-1}-2{n-k-1\choose k-1}+{n-2k+t-1\choose k-1}$. This is clearly maximized for $t=k-1$, in which case we get that there are ${n-1\choose k-1}-2{n-k-1\choose k-1}+{n-k-2\choose k-1} = {n-1\choose k-1}-{n-k-1\choose k-1}-{n-k-2\choose k-2}$ such sets.} $d_1(\ff)\le {n-1\choose k-1}-{n-k-1\choose k-1}-{n-k-2\choose k-2}$. To put it differently, we have $d_1(\mathcal H_k)-d_1(\ff)\ge {n-k-2\choose k-2}$.
Let us denote by $\alpha$ the number of sets containing $1$ and intersecting $[k+2,n]$ in at most $t-1$ elements. Then
\begin{equation}\label{eq18}\delta_t(\mathcal H_k)-\delta_t(\ff)\ge \frac{{n-k-2\choose k-2}-\alpha-{k\choose t}\gamma}{{n-k-1\choose t}}.
\end{equation}
 The right hand side actually provides a lower bound for the difference of the {\it average} degree in $[k+2,n]$ in $\mathcal H_k$ and $\mathcal F$. Since the average $t$-degree on $[k+2,n]$ in $\mathcal H_k$ is equal to the minimum $t$-degree, the right hand side is also a lower bound for the difference of the minimum $t$-degree. Let us show that the right hand side is a lower bound for the difference in the average $t$-degrees on $[k+2,n]$. Indeed, the first two terms in the numerator on the right hand side gives a lower bound on the average loss in the $t$-degree of $t$-sets in $[k+2,n]$ due to the missing sets that contain $1$ and intersect $[k+2,n]$ in at least $t$ elements. Each of them contribute at least $1$ to the $t$-degree of at least one $t$-set in $[k+2,n]$. At the same time, each set contributing to $\gamma$ can contribute at most ${k\choose t}/{n-k-1\choose t}$ to the average $t$-degree on $[k+2,n]$, which explains the third term.

We have
$$\alpha=\sum_{i=k-t}^{k-1}{k\choose i}{n-k-1\choose k-i-1}.$$
We have ${n-k-1\choose t-j}>2{n-k-1\choose t-j-1}$ for any $j\ge0$, since $n> 2k\ge 6t$.  Therefore, $$\alpha\le {k\choose k-t}{n-k-1\choose t}={k\choose t}{n-k-1\choose t}.$$
To show that the RHS in \eqref{eq18} is always nonnegative and thus to conclude the proof in Case 1, it is sufficient to show the following inequality:
\begin{equation}\label{eq19} {n-k-2\choose k-2}\ge {k\choose t}\Big({n-k-1\choose t}+{n-k+t+1\choose t+2}\Big).\end{equation}
Note that we used the assumption $\gamma< {n-k+t+1\choose t+2}$. In what follows, we verify \eqref{eq19}.\\

If $t=1$, $n\ge 2k+5$, then in the worst case for \eqref{eq19} is $n=2k+5$. This reduces to  $${k+3\choose 5}\ge k\Big(k+4+{k+7\choose 3}\Big),$$
which holds for $k\ge 30$.\footnote{If $n\ge 2k+6$, then \eqref{eq19} holds already for $k\ge 15$, and if $n\ge 2k+8$, then it holds for $k\ge 10$.}\\

If $1<t\le \frac k{4}-2$, $n\ge 2k+14t$, then the right hand side of \eqref{eq19} is at most ${k\choose t}{n-k+t+2\choose t+2}$, and we have \begin{align}\notag {k\choose t}{n-k+t+2\choose t+2} =&\  \frac{k!(n-k+t+2)!}{(k-t)!t!(t+2)!(n-k)!}\\ \notag\le& \ \Big(\frac{n-k-2}{n-k-t-4}\Big)^{t+4}\cdot \frac{k!(n-k-2)!}{(k-t)!(2t+2)!(n-k-t-4)!}\cdot {2t+2\choose t}\\ \notag
\le& \ \Big(\frac 54\Big)^{t+4}\frac{(n-k-2)!}{(2t+2)!(n-k-2t-4)!}\cdot 2^{2t}\\ \label{eqcomp}
\le&\ 5^{t+1}{n-k-2\choose 2t+2}.\end{align}

 On the other hand, in the same assumptions, we have \begin{align}\notag{n-k-2\choose 2t+2}<&\  \Big(\frac{4t+4}{n-k-4t-6}\Big)^{2t+2}{n-k-2\choose 4t+4} \\ \notag \le&\  \Big(\frac{4t+4}{14t+2}\Big)^{2t+2}{n-k-2\choose 4t+4}\le \Big(\frac{2}{5}\Big)^{2t+2}{n-k-2\choose 2t+2}\\ \label{eqcomp2}<&\ 5^{-t-1}{n-k-2\choose k-2}.\end{align}
 Comparing \eqref{eqcomp} and \eqref{eqcomp2}, we conclude that \eqref{eq19} holds.







\vskip+0.2cm


\textbf{Case 2. ${n-k+t+1\choose t+2}\le \gamma\le {n-4\choose k-3}$. }
We can get the  following analogue of Statement~\ref{stat1}.
\begin{stat} If a family $\ff\subset {n\choose k}$ with maximum degree $\Delta$ and diversity $\gamma$  satisfies \begin{equation}\label{eq12}\Delta+\frac {k}{k-t}\gamma\le \Big(1-\prod_{i=1}^{k}\frac{n-k+1-i}{n-t-i}\Big){n-1\choose k-1},\end{equation} then $\delta_t(\ff)\le {n-t-1\choose k-t-1}-{n-t-k-1\choose k-t-1}$.\end{stat}
\begin{proof}
We may repeat the calculation in \eqref{eq4}, and get that
$$\delta_t(\ff)\le  \Big(1-\prod_{i=1}^{k}\frac{n-k+1-i}{n-t-i}\Big){n-t-1\choose k-t-1}  \overset{\eqref{eq07}}{=} {n-t-1\choose k-t-1}-{n-t-k-1\choose k-t-1}.$$
\end{proof}

We apply \eqref{eq01}. The bounds on $\gamma$ defining Case~2 correspond to the range $3\le u\le k-t-2$ in \eqref{eq01}. Then \eqref{eq12} is implied by the following inequality. \begin{equation}\label{eq13}{n-u-1\choose k-1}-\frac{k}{k-t}{n-u-1\choose n-k-1}-\prod_{i=1}^{k}\frac{n-k+1-i}{n-t-i}{n-1\choose k-1}\ge 0.\end{equation} We have
\begin{footnotesize}$${n-u-1\choose k-1}-\frac{k}{k-t}{n-u-1\choose n-k-1} \overset{\eqref{eq04}}{\ge} \Big(1- \frac{k(k-1)(k-2)}{(k-t)(n-k-1)(n-k-2)}\Big) \prod_{i=1}^{k-1}\frac{n-u-i}{n-i}{n-1\choose k-1}.$$\end{footnotesize}
The last expression is minimized when $u=k-t-2$. Comparing the product above with the product in \eqref{eq13}, we get \begin{small}$$\frac{\prod_{i=1}^{k}\frac{n-k+1-i}{n-t-i}} {\prod_{i=1}^{k-1}\frac{n-u-i}{n-i}}\le \frac{\prod_{i=1}^{k}\frac{n-k+1-i}{n-t-i}} {\prod_{i=1}^{k-1}\frac{n-k+t+2-i}{n-i}}= \frac{\prod_{i=1}^{k-t-1}\frac{n-k-t-i}{n-t-i}} {\prod_{i=1}^{k-t-2}\frac{n-k+1-i}{n-i}}\le \frac{\prod_{i=1}^{k-t-1}\frac{n-k-t-i}{n-t-i}} {\prod_{i=1}^{k-t-2}\frac{n-k-t-i}{n-t-i}}=1-\frac{k}{n-k+1}.$$\end{small}
Therefore, to prove \eqref{eq13}, it is sufficient for us to show that \begin{equation}\label{eq16} 1- \frac{k(k-1)(k-2)}{(k-t)(n-k-1)(n-k-2)}\ge 1-\frac{k}{n-k+1}.\end{equation}
For the fraction on the left hand side, we use the following property: if we add 1 to one of the multiples in the numerator and 1 to one of the second or third multiple in the denominator, then the fraction will only increase, and the expression in the left hand side will decrease. If $t=1$, then the LHS of \eqref{eq16} is $$1- \frac{k(k-2)}{(n-k-1)(n-k-2)}\ge 1-\frac{k^2}{(n-k+1)(n-k-2)}\ge 1-\frac{k}{n-k+1},$$
where the last inequality holds s long as $n\ge 2k+2$. Therefore, \eqref{eq13} is satisfied for any $k$ and $n\ge 2k+2$.

If $1<t\le \frac k4$ and $n\ge 2k+4t$, then $(k-t)(n-k-1)\ge (k-t)(k+4t-1)= k^2+3kt-4t^2-k+t>k^2$, and, therefore,
the LHS of \eqref{eq16} is at least $$1- \frac{k^3}{(k-t)(n-k-1)(n-k+1)}> 1-\frac{k}{n-k+1},$$
and \eqref{eq13} is satisfied again.\\

\textbf{Case 3. $\gamma\ge {n-4\choose k-3}$. } We again use the bound \eqref{eq06}. That is, we have to verify that
\begin{equation}\label{eq08}|\ff|\le \frac{{n\choose t}\big({n-t-1\choose k-t-1}-{n-t-k-1\choose k-t-1}\big)}{{k\choose t}}\overset{\eqref{eq07}, \eqref{eq05}}{=}\frac{k-t}{n-t}\Big(1-\prod_{i=1}^{k}\frac{n-k+1-i}{n-t-i}\Big){n\choose k}.\end{equation}

Using \eqref{eq25}, the inequality \eqref{eq08} is implied by
\begin{equation}\label{eq11}\frac{k-t}{n-t} \Big(1-\prod_{i=1}^{k}\frac{n-k+1-i}{n-t-i}\Big)\ge \frac{k(k-1)(3n-2k-2)}{n(n-1)(n-2)}.\end{equation}
In what follows, we verify \eqref{eq11}.
If $t=1$, then it simplifies to $$\Big(1-\prod_{i=1}^{k}\frac{n-k+1-i}{n-1-i}\Big)\ge \frac{k(3n-2k-2)}{n(n-2)} \ \ \Leftrightarrow \ \ \frac{(n-k)(n-2k-2)}{n(n-2)}\ge \prod_{i=1}^{k}\frac{n-k+1-i}{n-1-i}.$$
This is equivalent to \begin{equation}\label{eq09}\frac{(n-2k-2)}{n}\ge \prod_{i=2}^{k}\frac{n-k+1-i}{n-1-i}.\end{equation}
The right hand side is at most $$\frac{(n-2k+1)(n-2k+2)}{(n-3)(n-4)}\le \frac{(n-2k+4)(n-2k+2)}{n(n-4)}.$$ Therefore \eqref{eq09} follows from
$$\frac{n-2k-2}{n}\ge \frac{(n-2k+4)(n-2k+2)}{n(n-4)}\ \ \Leftrightarrow (n-2k-2)(n-4)\ge (n-2k+4)(n-2k+2),$$ which holds for any $n\ge 2k+4$ and $k\ge 12.$

If $1<t\le \frac k4-2$, then $\frac{k+\frac 43 t}k\ge \frac k{k-t}$. We use this in the second inequality below:
$$1-\frac{n-t}{k-t}\cdot\frac{k(k-1)(3n-2k-2)}{n(n-1)(n-2)}\ge 1-\frac{k^2(3n-2k)}{(k-t)n^2}\ge$$
$$1-\frac{(k+\frac 43t)(3n-2k)}{n^2} = \frac{(n-k)(n-2k)-\frac 43 t(3n-2k)}{n^2}\ge \frac{(n-k)(n-2k-6t)}{n^2}.$$
On the other hand,
$$\prod_{i=1}^{k}\frac{n-k+1-i}{n-t-i}=\prod_{i=1}^{k-t-1} \frac{n-t-k-i}{n-t-i}\le \Big(\frac{n-k}{n}\Big)^{k-t-1}\le \Big(\frac {n-k}n\Big)^{\frac {3k}{4}+1}.$$
Therefore, combining these calculations, the inequality \eqref{eq11} would follow from the inequality
\begin{equation}\label{eq10}1-\frac{2k+6t}n\ge \Big(1-\frac k{n}\Big)^{3k/4}.\end{equation}
 If $2k+14t\le n\le7k$, then the right hand side of the inequality above is at most $ e^{-\frac{3k^2}{4n}}<e^{-\frac {k}{10}},$ while the left hand side is at least $\frac {8t}{2k+14t}>\frac {16}{2k+28}$. It is easy to see that, say, for $k\ge 15$, we have $\frac {16}{2k+28}>e^{-\frac {k}{10}}$.

If $n>7k$ and $k\ge10$, then $$\Big(1-\frac k{n}\Big)^{3k/4}<\Big(1-\frac k{n}\Big)^{7}<1-\frac {7k}n+\frac{21k^2}{n^2}\le 1-\frac {4k}n <1-\frac {2k+6t}n.$$
Therefore, the inequality \eqref{eq10} is verified.

To conclude, we remark that the only conditions on $k$ that we used for $t\ge 2$ were $k\ge 4t+8$ and $k\ge 10$. The later one is implied by the former one.

\subsection{Proof of Theorem \ref{degemc}}\label{sec6}
Fix $t\ge 1$ and take a family $\ff$ satisfying the requirements of the theorem. If $\ff$ is isomorphic to a subfamily of $\aaa_0(n,k,s)$ then we are done. Otherwise, $\tau(\ff)\ge s+1$ and thus  $|\ff|$ satisfies \eqref{eqhil}.
By simple double counting as in \eqref{eq06}, we have
$$\delta_t(\ff)\le \frac{{k\choose t}}{{n\choose t}}\Big[{n\choose k}-{n-s\choose k} - \frac{u-s-1}u{n-s-k\choose k-1}\Big].$$
Note that $$\delta_t(\mathcal A_0(n,k,s)) = {n-t\choose k-t}-{n-s-t\choose k-t} = \frac{{k\choose t}}{{n\choose t}}{n\choose k}-\frac{{k\choose t}}{{n-s\choose t}}{n-s\choose k}.$$
Therefore,
$$\delta_t(\aaa_0(n,k,s))-\delta_t(\ff)\ge \frac{{k\choose t}(u-s-1)}{{n\choose t}u}{n-s-k\choose k-1}-\Big[\frac{{k\choose t}}{{n-s\choose t}}-\frac{{k\choose t}}{{n\choose t}}\Big]{n-s\choose k}=$$
$$\frac{{k\choose t}}{{n\choose t}}\Bigg[\frac{u-s-1}{u}{n-s-k\choose k-1}-\Big[\prod_{i=0}^{t-1} \frac{n-i}{n-s-i}-1\Big]{n-s\choose k}\Bigg]=:(*).$$
We have $\prod_{i=0}^{t-1} \frac{n-i}{n-s-i}-1\le (1+\frac s{n-s-t})^t-1.$ It is not difficult to verify that for $\theta<\frac 1{2m}$ one has $(1+\theta)^m\le 1+2m\theta$. Therefore, assuming that \begin{equation}\label{eq33}n\ge s+t+2st,\end{equation} we have \begin{equation}\label{eq35}\prod_{i=0}^{t-1} \frac{n-i}{n-s-i}-1\le \frac {2ts}{n-s-t}.\end{equation}
On the other hand, we have $(1-\theta)^m\ge 1-m\theta$ and $t\le k-1$, and thus $$\frac{{n-s-k\choose k-1}}{{n-s\choose k}}=\frac {k}{n-s-k+1}\prod_{i=0}^{k-2}\frac{n-s-k-i}{n-s-i}>\frac{k}{n-s-t} \Big(1-\frac {k}{n-s-k}\Big)^{k-1}>$$
$$\frac{k}{n-s-t}\Big(1-\frac{(k-1)k}{n-s-k}\Big)>\frac k{2(n-s-t)},$$
provided \begin{equation}\label{eq34}n\ge s+k+2k(k-1).\end{equation} We conclude that, provided that \eqref{eq33} and \eqref{eq34} hold, we get
$$(*)> \frac{{k\choose t}}{{n\choose t}}\Bigg[\frac{u-s-1}{u}\frac k{2(n-s-t)}-\frac {2ts}{n-s-t}\Bigg]{n-s\choose k},$$
which is nonnegative provided $k\ge 4ts\frac u{u-s-1}$. This inequality holds for $k\ge 5ts$ and $u\ge 9s$. The last inequality is satisfied for $n\ge 2k^2$, since then $n\ge 2k^2\ge (9s+s)k\ge (9s+s-1)(k-1)+s+k$. We note that, with this choice of $n$ and $k$, both \eqref{eq33} and \eqref{eq34} hold.

For $t=1$ one may improve \eqref{eq35} to $\frac n{n-s}-1\le \frac s{n-s}$ and the condition on $k$ may be relaxed to $k\ge 3s$. The equality part of the statement follows easily from the fact that strict inequality is obtained in the case when $\tau(\ff)\ge s+1$.

\section{Conclusion}\label{sec7}
In this paper, we explored several questions concerning degrees and $t$-degrees of  intersecting families and families with small matchings. Some of these questions remain only partially resolved, and it would be highly desirable to settle them.

The first problem is to give a purely combinatorial proof of Theorem~\ref{thmhz} for $n=2k+1$. Although not fully optimized, our approach for the degree version of the Hilton--Milner family should surely fail for $n\le 2k+2$. Thus, we ask the following question.

\begin{pro} Is there an example for $n=2k+1$, such that there exists a non-trivial intersecting family $\ff$ with minimal $1$-degree higher than that of the Hilton-Milner family?
\end{pro}
One reason to believe that the answer to this question may be positive is that the degrees of elements in the Hilton-Milner family are irregular, even if we exclude the element of the highest degree out of consideration.
The main difficulty here is to understand the structure of families with large diversity for $n$ very close to $2k$.

We also believe that the conclusion of Theorems~\ref{thm01},~\ref{thm02} should hold for $n>2k+c$, where $c$ is some absolute constant, and any $t$.

Finally, the following question concerning cross-intersecting families seems interesting for us.

\begin{pro}\label{prob2} Consider two cross-intersecting families $\aaa,\ \bb\subset{[n]\choose k}$ that are disjoint. Is it true that $$\min\{|\aaa|, |\bb|\}\le \frac 12{n-1\choose k-1}?$$
\end{pro}

In between the first and the current version of the manuscript, Hao Huang published a manuscript \cite{Hua}, in which he addressed Problem~\ref{prob2}. In particular, he showed that the answer is positive as long as $n>2k^2$, and is negative as long as $n<ck^2$ for some $c>0$. Similar results were obtained by Frankl and the author \cite{FK17}.\\

{\sc Acknowledgements: } We thank Hao Huang for sharing the problem and for several fruitful discussions on the topic. We also thank the anonymous referees for many helpful comments that helped to correct some  calculation mistakes and improved the presentation of the paper.


\begin{thebibliography}{111}
\bibitem{aletal} N. Alon, P. Frankl, H. Huang, V. R\"odl, A. Ruci\'nski and B. Sudakov, {\it Large matchings in uniform hypergraphs and the conjectures of Erd\H os and Samuels}, J. Comb. Theory Ser. A 119 (2012), 1200--1215.
\bibitem{BGPR} L.I. Bogolyubskiy, A.S. Gusev, M.M. Pyaderkin and A.M. Raigorodskii, \textit{The independence numbers and the chromatic numbers of random subgraphs of some distance graphs}, Mat. Sbornik 206 (2015), N10, 3--36;  English transl. in Sbornik Math. 206 (2015), N10, 1340--1374.

\bibitem{BDE} B. Bollob\'as, D.E. Daykin and P. Erd\H os, \textit{Sets of independent edges of a hypergraph}, Quart. J. Math. Oxford Ser. 27 (1976), N2, 25--32.

\bibitem{BNR} B. Bollob\'as, B.P. Narayanan and A.M. Raigorodskii, \textit{On the stability of the Erd\H os-Ko-Rado theorem}, J. Comb. Theory Ser. A 137 (2016), 64--78.

\bibitem{Bor} P. Borg, {\it The maximum sum and the maximum product of sizes of cross-intersecting families,} European J. Comb. 35 (2014), 117--130.

\bibitem{Bor2} P. Borg, {\it The maximum product of weights of cross-intersecting families}, J. London Math. Soc.
94 (2016), 993--1018.

\bibitem{Derevo} N.M. Derevyanko and S.G. Kiselev, {\it Independence Numbers of Random Subgraphs of Some Distance Graphs}, Problems of Information Transmission 53 (2017), N4, 307--318.

\bibitem{EKL} D. Ellis, N. Keller and N. Lifshitz, {\it Stability versions of Erd\H os–-Ko-–Rado type theorems, via isoperimetry} (2016), arXiv:1604.02160

\bibitem{E} P. Erd\H os, \textit{A problem on independent r-tuples}, Ann. Univ. Sci. Budapest. 8 (1965), 93--95.

\bibitem{EKR} P. Erd\H os, C. Ko and R. Rado, \textit{Intersection theorems for systems of finite sets}, The Quarterly Journal of Mathematics, 12 (1961) N1, 313--320.


\bibitem{Fra1} P. Frankl,  \textit{Erdos-Ko-Rado theorem with conditions on the maximal degree}, J.  Comb. Theory Ser. A 46 (1987), N2, 252--263.

\bibitem{F4} P. Frankl, \textit{Improved bounds for Erd\H os' Matching Conjecture}, J. Comb. Theory Ser. A 120 (2013), 1068--1072.

\bibitem{F5} P. Frankl, {\it On the maximum number of edges in a hypergraph with given matching number}, Disc. Appl. Math. 216 (2017), 562--581.

\bibitem{FHHZ} P. Frankl, J. Han, H. Huang and Y. Zhao {\it A degree version of Hilton--Milner theorem}, J. Comb. Theory Ser. A 155 (2018), 493--502.

\bibitem{FK5} P. Frankl and A. Kupavskii, {\it Counting intersecting and pairs of cross-intersecting families}, Comb. Probab. Comput. 27 (2018), N1, 60--68.

\bibitem{FK1} P. Frankl and A. Kupavskii, {\it Erd\H os--Ko--Rado theorem for $\{0, \pm 1\}$-vectors}, J. Comb. Theory Ser. A 155 (2018), 157--179.

\bibitem{FK7} P. Frankl and A. Kupavskii, \textit{Families with no $s$ pairwise disjoint sets}, J. London Math. Soc. 95 (2017), N3, 875-894.

\bibitem{FK8} P. Frankl, A. Kupavskii, \textit{Two problems on matchings in set families - in the footsteps of Erdős and Kleitman} J. Comb. Theory Ser. B (2019), arXiv:1607.06126

\bibitem{FK16} P. Frankl and A. Kupavskii, {\it The Erd\H os Matching Conjecture and concentration inequalities} (2018), arXiv:1806.08855.

\bibitem{FK17} P. Frankl and A. Kupavskii, {\it Disjoint cross-intersecting families}, preprint.


\bibitem{FT6} P. Frankl and N. Tokushige, {\it A note on Huang-–Zhao theorem on intersecting families with large minimum degree}, Disc. Math. 340 (2016), N5, 1098--1103.

\bibitem{FT7} P. Frankl and N. Tokushige, {\it Invitation to intersection problems for finite sets}, J. Comb. Theory Ser. A 144 (2016), 157--211.

\bibitem{HK} J. Han and Y. Kohayakawa, {\it The maximum size of a non-trivial intersecting uniform family that is not a subfamily of the Hilton--Milner family}, Proc. Amer. Math. Soc. 145 (2017), N1, 73--87.

\bibitem{HPS} H. H\`an, Y. Person and M. Schacht, {\it On perfect matchings in uniform hypergraphs with large minimum vertex degree}, SIAM J. Disc. Math. 23 (2009), N2, 732--748.

\bibitem{HM} A.J.W. Hilton and E.C. Milner, \textit{Some intersection theorems for systems of finite sets}, Quart. J. Math. Oxford 18 (1967), 369--384.

\bibitem{Hua} H. Huang, {\it Two extremal problems on intersecting families}, European J. Comb 76 (2019), 1--9.

\bibitem{HZ} H. Huang and Y. Zhao, {\it Degree versions of the Erd\H os-Ko-Rado theorem and Erd\H os hypergraph matching conjecture}, J. Comb. Theory Ser. A  150 (2017),  233-–247.


\bibitem{IK} F. Ihringer and A. Kupavskii, \textit{Regular intersecting families} (2017), 	arXiv:1709.10462


\bibitem{KL} N. Keller and N. Lifshitz, {\it The junta method for hypergraphs and the Erd\H os--Chv\'atal simplex conjecture} (2017),	arXiv:1707.02643

\bibitem{KostM}A. Kostochka and D. Mubayi, {\it The structure of large intersecting families}, Proc. Amer. Math. Soc.  145 (2017), N6,  2311--2321.

\bibitem{KO} D. K\"uhn and D. Osthus, {\it Embedding large subgraphs into dense graphs}, Surveys in combinatorics (2009). Papers from the 22nd British combinatorial conference, St. Andrews, UK, July 5--10, 2009, pp. 137--167.

\bibitem{Kup21} A. Kupavskii, {\it Diversity of intersecting families}, European J. Comb. 74 (2018), 39--47.

\bibitem{Kupstruc} A. Kupavskii, {\it Structure and properties of large intersecting families}, preprint.

\bibitem{KZ} A. Kupavskii and D. Zakharov, {\it Regular bipartite graphs and intersecting families}, J. Comb. Theory Ser. A 155 (2018), 180--189.

\bibitem{KSV} M. Kwan, B. Sudakov and P. Vieira, {\it Non-trivially intersecting multi-part families,} J.  Comb. Theory Ser. A 156 (2018), 44–-60.
\bibitem{Pyad2} M. Pyaderkin, {\it Independence Numbers of Random Subgraphs of Distance Graphs,} Math.
Notes, 99 (2016), N4, 556--563.
\bibitem{Pyad} M. Pyaderkin, {\it On the stability of some Erdos-Ko-Rado type results}, Disc. Math. 340 (2017), N4, 822--831.
\bibitem{PR} M. Pyaderkin and A. Raigorodskii, {\it On random subgraphs of Kneser graphs and their generalizations},  Dokl. Math. 94 (2016), N2, 547--549.

\bibitem{Raig} A.M. Raigorodskii, {\it On the stability of the independence number of a random subgraph,}
Doklady Math. 96 (2017), N3, 628--630.
\bibitem{WZ} J. Wang and H. Zhang, {\it Nontrivial independent sets of bipartite graphs and cross-intersecting families}, J. Comb. Theory Ser. A 120 (2013), 129--141.

\end{thebibliography}
\end{document}